\providecommand{\U}[1]{\protect \rule{.1in}{.1in}}
\newtheorem{theorem}{Theorem}[section]
\newtheorem{definition}[theorem]{Definition}
\newtheorem{example}[theorem]{Example}
\newtheorem{lemma}[theorem]{Lemma}
\newtheorem{remark}[theorem]{Remark}
\newenvironment{proof}[1][Proof]{\noindent \textbf{#1.} }{\  $\Box$}
\numberwithin{equation}{section}
\begin{document}

\title{Product space for two processes with independent increments under nonlinear expectations}
\author{Qiang Gao \thanks{School of Mathematics, Shandong University, Jinan, Shandong
250100, PR China. qianggao1990@163.com}
\and Mingshang Hu \thanks{Qilu Institute of Finance, Shandong University, Jinan,
Shandong 250100, PR China. humingshang@sdu.edu.cn. Research supported by NSF
(No. 11201262 and 11301068) and Shandong Province (No.BS2013SF020 and
ZR2014AP005) }
\and Xiaojun Ji \thanks{School of Mathematics, Shandong University, Jinan, Shandong
250100, PR China. yzxxn2009@163.com}
\and Guomin Liu \thanks{Qilu Institute of Finance, Shandong University, Jinan,
Shandong 250100, PR China. sduliuguomin@163.com. Gao, Hu, Ji and Liu's
research was partially supported by NSF (No. 10921101) and by the 111 Project
(No. B12023) }}
\maketitle

\textbf{Abstract}. In this paper, we consider the product space for two
processes with independent increments under nonlinear expectations. By
introducing a discretization method, we construct a nonlinear expectation
under which the given two processes can be seen as a new process with
independent increments.

{\textbf{Key words}. } $G$-expectation, Nonlinear expectation, Distribution,
Independence, Tightness

\textbf{AMS subject classifications.} 60H10, 60E05

\addcontentsline{toc}{section}{\hspace*{1.8em}Abstract}

\section{Introduction}

Peng \cite{P1, P2} introduced the notions of distribution and independence
under nonlinear expectation spaces. Under sublinear case, Peng \cite{P5}
obtained the corresponding central limit theorem for a sequence of i.i.d.
random vectors. The limit distribution is called $G$-normal distribution.
Based on this distribution, Peng \cite{P3, P4} gave the definition of
$G$-Brownian motion, which is a kind of process with stationary and
independent increments, and then discussed the It\^{o} stochastic analysis
with respect to $G$-Brownian motion.

It is well-known that the existence for a sequence of i.i.d. random vectors is
important for central limit theorem. In the nonlinear case, Peng \cite{P7}
introduced the product space technique to construct a sequence of i.i.d.
random vectors. But this product space technique does not hold in the
continuous time case. More precisely, let $(M_{t})_{t\geq0}$ and
$(N_{t})_{t\geq0}$ be two $d$-dimensional processes with independent
increments defined respectively on nonlinear expectation spaces $(\Omega
_{1},\mathcal{H}_{1},\mathbb{\hat{E}}_{1})$ and $(\Omega_{2},\mathcal{H}%
_{2},\mathbb{\hat{E}}_{2})$, we want to construct a $2d$-dimensional process
$(\tilde{M}_{t},\tilde{N}_{t})_{t\geq0}$ with independent increments defined
on a nonlinear expectation space $(\Omega,\mathcal{H},\mathbb{\hat{E}})$ such
that $(\tilde{M}_{t})_{t\geq0}\overset{d}{=}(M_{t})_{t\geq0}$ and $(\tilde
{N}_{t})_{t\geq0}\overset{d}{=}(N_{t})_{t\geq0}$. Usually, set $\Omega
=\Omega_{1}\times \Omega_{2}$, $\tilde{M}_{t}(\omega)=M_{t}(\omega_{1})$,
$\tilde{N}_{t}(\omega)=N_{t}(\omega_{2})$ for each $\omega=(\omega_{1}%
,\omega_{2})\in \Omega$, $t\geq0$. If we use Peng's product space technique,
then we can only get a $2d$-dimensional process $(\tilde{M}_{t},\tilde{N}%
_{t})_{t\geq0}$ such that $(\tilde{M}_{t})_{t\geq0}$ is independent from
$(\tilde{N}_{t})_{t\geq0}$ or $(\tilde{N}_{t})_{t\geq0}$ is independent from
$(\tilde{M}_{t})_{t\geq0}$. Different from linear expectation case, the
independence is not mutual under nonlinear case (see \cite{H1}). So this
$(\tilde{M}_{t},\tilde{N}_{t})_{t\geq0}$ is not a process with independent increments.

In this paper, we introduce a discretization method, which can overcome the
problem of independence. More precisely, for each given $\mathcal{D}%
_{n}=\{i2^{-n}:i\geq0\}$, we can construct a nonlinear expectation
$\mathbb{\hat{E}}^{n}$ under which $(\tilde{M}_{t},\tilde{N}_{t}%
)_{t\in \mathcal{D}_{n}}$ posesses independent increments. But $\mathbb{\hat
{E}}^{n}$, $n\geq1$, are not consistent, i.e., the values of the same random
variable under $\mathbb{\hat{E}}^{n}$ are not equal. Fortunately, we can prove
that the limit of $\mathbb{\hat{E}}^{n}$ exisits by using the notion of
tightness, which was introduced by Peng in \cite{P8} to prove central limit
theorem under sublinear case. Denote the limit of $\mathbb{\hat{E}}^{n}$ by
$\mathbb{\hat{E}}$, we show that $(\tilde{M}_{t},\tilde{N}_{t})_{t\geq0}$ is
the process with independent increments under $\mathbb{\hat{E}}$.

This paper is organized as follows: In Section 2, we recall some basic notions
and results of nonlinear expectations. The main theorem is stated and proved
in Section 3.

\section{Preliminaries}

We present some basic notions and results of nonlinear and sublinear
expectations in this section. More details can be found in [1-20].

Let $\Omega$ be a given nonempty set and $\mathcal{H}$ be a linear space of
real-valued functions on $\Omega$ such that if $X_{1}$,$\dots$,$X_{d}%
\in \mathcal{H}$, then $\varphi(X_{1},X_{2},\dots,X_{d})\in \mathcal{H}$ for
each $\varphi \in C_{b.Lip}(\mathbb{R}^{d})$, where $C_{b.Lip}(\mathbb{R}^{d})$
denotes the set of all bounded and Lipschitz functions on $\mathbb{R}^{d}$.
$\mathcal{H}$ is considered as the space of random variables. Similarly,
$\{X=(X_{1},\dots,X_{d}):X_{i}\in \mathcal{H},i\leq d\}$ denotes the space of
$d$-dimensional random vectors.

\begin{definition}
A sublinear expectation $\hat{\mathbb{E}}$ on $\mathcal{H}$ is a functional
$\mathbb{\hat{E}}:\mathcal{H}\rightarrow \mathbb{R}$ satisfying the following
properties: for each $X,Y\in \mathcal{H}$,

\begin{description}
\item[(i)] \textbf{Monotonicity:}\quad$\mathbb{\hat{E}}[X]\geq \mathbb{\hat{E}%
}[Y]\  \  \text{if}\ X\geq Y$;

\item[(ii)] \textbf{Constant preserving:}\quad$\mathbb{\hat{E}}%
[c]=c\  \  \  \text{for}\ c\in \mathbb{R}$;

\item[(iii)] \textbf{Sub-additivity:}\quad$\mathbb{\hat{E}}[X+Y]\leq
\mathbb{\hat{E}}[X]+\mathbb{\hat{E}}[Y]$;

\item[(iv)] \textbf{Positive homogeneity:}\quad$\mathbb{\hat{E}}[\lambda
X]=\lambda \mathbb{\hat{E}}[X]\  \  \  \text{for}\  \lambda \geq0$.
\end{description}

The triple $(\Omega,\mathcal{H},\mathbb{\hat{E}})$ is called a sublinear
expectation space. If (i) and (ii) are satisfied, $\mathbb{\hat{E}}$ is called
a nonlinear expectation and the triple $(\Omega,\mathcal{H},\mathbb{\hat{E}})$
is called a nonlinear expectation space.
\end{definition}

Let $(\Omega,\mathcal{H},\mathbb{\hat{E}})$ be a nonlinear (resp. sublinear)
expectation space. For each given $d$-dimensional random vector $X$, we define
a functional on $C_{b.Lip}(\mathbb{R}^{d})$ by%
\[
\mathbb{\hat{F}}_{X}[\varphi]:=\mathbb{\hat{E}}[\varphi(X)]\text{ for each
}\varphi \in C_{b.Lip}(\mathbb{R}^{d}).
\]
It is easy to verify that $(\mathbb{R}^{d},C_{b.Lip}(\mathbb{R}^{d}%
),\mathbb{\hat{F}}_{X})$ forms a nonlinear (resp. sublinear) expectation
space. $\mathbb{\hat{F}}_{X}$ is called the distribution of $X$. Two
$d$-dimensional random vectors $X_{1}$ and $X_{2}$ defined respectively on
nonlinear expectation spaces $(\Omega_{1},\mathcal{H}_{1},\mathbb{\hat{E}}%
_{1})$ and $(\Omega_{2},\mathcal{H}_{2},\mathbb{\hat{E}}_{2})$ are called
identically distributed, denoted by $X_{1}\overset{d}{=}X_{2}$, if
$\mathbb{\hat{F}}_{X_{1}}=\mathbb{\hat{F}}_{X_{2}}$, i.e.,%
\[
\mathbb{\hat{E}}_{1}[\varphi(X_{1})]=\mathbb{\hat{E}}_{2}[\varphi
(X_{2})]\text{ for each }\varphi \in C_{b.Lip}(\mathbb{R}^{d}).
\]

Similar to the classical case, Peng \cite{P8} gave the following definition of
convergence in distribution.

\begin{definition}
Let $X_{n}$, $n\geq1$, be a sequence of $d$-dimensional random vectors defined
respectively on nonlinear (resp. sublinear) expectation spaces $(\Omega
_{n},\mathcal{H}_{n},\mathbb{\hat{E}}_{n})$. $\{X_{n}:n\geq1\}$ is said to
converge in distribution if, for each fixed $\varphi \in C_{b.Lip}%
(\mathbb{R}^{d})$, $\{ \mathbb{\hat{F}}_{X_{n}}[\varphi]:n\geq1\}$ is a Cauchy
sequence. Define%
\[
\mathbb{\hat{F}}[\varphi]=\lim_{n\rightarrow \infty}\mathbb{\hat{F}}_{X_{n}%
}[\varphi],
\]
the triple $(\mathbb{R}^{d},C_{b.Lip}(\mathbb{R}^{d}),\mathbb{\hat{F}})$ forms
a nonlinear (resp. sublinear) expectation space.
\end{definition}

If $X_{n}$, $n\geq1$, is a sequence of $d$-dimensional random vectors defined
on the same sublinear expectation space $(\Omega,\mathcal{H},\mathbb{\hat{E}%
})$ satisfying%
\[
\lim_{n,m\rightarrow \infty}\mathbb{\hat{E}}[|X_{n}-X_{m}|]=0,
\]
then we can deduce that $\{X_{n}:n\geq1\}$ converges in distribution by%
\[
|\mathbb{\hat{F}}_{X_{n}}[\varphi]-\mathbb{\hat{F}}_{X_{m}}[\varphi
]|=|\mathbb{\hat{E}}[\varphi(X_{n})]-\mathbb{\hat{E}}[\varphi(X_{m})]|\leq
C_{\varphi}\mathbb{\hat{E}}[|X_{n}-X_{m}|],
\]
where $C_{\varphi}$ is the Lipschitz constant of $\varphi$.

The following definition of tightness is important for obtaining a subsequence
which converges in distribution.

\begin{definition}
Let $X$ be a $d$-dimensional random vector defined on a sublinear expectation
space $(\Omega,\mathcal{H},\mathbb{\hat{E}})$. The distribution of $X$ is
called tight if, for each $\varepsilon>0$, there exists an $N>0$ and
$\varphi \in C_{b.Lip}(\mathbb{R}^{d})$ with $I_{\{x:|x|\geq N\}}\leq \varphi$
such that $\mathbb{\hat{F}}_{X}[\varphi]=\mathbb{\hat{E}}[\varphi
(X)]<\varepsilon$.
\end{definition}

\begin{definition}
Let $\{ \mathbb{\hat{E}}_{\lambda}:\lambda \in I\}$ be a family of nonlinear
expectations and $\mathbb{\hat{E}}$ be a sublinear expectation defined on
$(\Omega,\mathcal{H})$. $\{ \mathbb{\hat{E}}_{\lambda}:\lambda \in I\}$ is said
to be dominated by $\mathbb{\hat{E}}$ if, for each $\lambda \in I$,%
\[
\mathbb{\hat{E}}_{\lambda}[X]-\mathbb{\hat{E}}_{\lambda}[Y]\leq \mathbb{\hat
{E}}[X-Y]\text{ for each }X,Y\in \mathcal{H}.
\]

\end{definition}

\begin{definition}
Let $X_{\lambda}$, $\lambda \in I$, be a family of $d$-dimensional random
vectors defined respectively on nonlinear expectation spaces $(\Omega
_{\lambda},\mathcal{H}_{\lambda},\mathbb{\hat{E}}_{\lambda})$. $\{
\mathbb{\hat{F}}_{X_{\lambda}}:\lambda \in I\}$ is called tight if there exists
a tight sublinear expectation $\mathbb{\hat{F}}$ on $(\mathbb{R}^{d}%
,C_{b.Lip}(\mathbb{R}^{d}))$ which dominates $\{ \mathbb{\hat{F}}_{X_{\lambda
}}:\lambda \in I\}$.
\end{definition}

\begin{remark}
A family of sublinear expectations $\{ \mathbb{\hat{F}}_{X_{\lambda}}%
:\lambda \in I\}$ on $(\mathbb{R}^{d},C_{b.Lip}(\mathbb{R}^{d}))$ is tight if
and only if $\mathbb{\hat{F}}[\varphi]=\sup_{\lambda \in I}\mathbb{\hat{F}%
}_{X_{\lambda}}[\varphi]$ for each $\varphi \in C_{b.Lip}(\mathbb{R}^{d})$ is a
tight sublinear expectation.
\end{remark}

\begin{theorem}
\label{th2.1} (\cite{P8}) Let $X_{n}$, $n\geq1$, be a sequence of
$d$-dimensional random vectors defined respectively on nonlinear expectation
spaces $(\Omega_{n},\mathcal{H}_{n},\mathbb{\hat{E}}_{n})$. If $\{ \mathbb{\hat
{F}}_{X_{n}}:n\geq1\}$ is tight, then there exists a subsequence $\{X_{n_{i}%
}:i\geq1\}$ which converges in distribution.
\end{theorem}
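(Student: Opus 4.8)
The plan is to adapt the classical proof of the easy half of Prokhorov's theorem (tightness $\Rightarrow$ relative compactness) to the present functional setting, using a diagonal extraction together with the dominating sublinear expectation to control the passage from a countable dense family of test functions to all of $C_{b.Lip}(\mathbb{R}^{d})$. Let $\mathbb{\hat{F}}$ be a tight sublinear expectation on $(\mathbb{R}^{d},C_{b.Lip}(\mathbb{R}^{d}))$ dominating $\{\mathbb{\hat{F}}_{X_{n}}:n\geq1\}$; such an $\mathbb{\hat{F}}$ exists by the hypothesis of tightness. The goal is to extract a single subsequence $\{X_{n_{i}}\}$ along which $\mathbb{\hat{F}}_{X_{n_{i}}}[\varphi]$ converges for \emph{every} $\varphi\in C_{b.Lip}(\mathbb{R}^{d})$, which is exactly convergence in distribution by definition.

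First I would fix a countable family $\{\varphi_{k}:k\geq1\}\subset C_{b.Lip}(\mathbb{R}^{d})$, closed under truncation at rational levels, that is dense for uniform convergence on compact sets; concretely, for every $\varphi\in C_{b.Lip}(\mathbb{R}^{d})$, every $N>0$ and every $\delta>0$ there should be a $k$ with $\sup_{|x|\leq N}|\varphi(x)-\varphi_{k}(x)|<\delta$ and $\|\varphi_{k}\|_{\infty}\leq\|\varphi\|_{\infty}+1$. Its existence is routine: for each integer $N$ the Banach space $C(\overline{B}_{N})$ is separable, so one picks a countable set of Lipschitz functions dense in it for the sup-norm, extends each to a bounded Lipschitz function on $\mathbb{R}^{d}$ (e.g.\ by a McShane extension followed by a rational truncation), and takes the union over $N$. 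Since $\mathbb{\hat{E}}_{n}$ is a nonlinear expectation, monotonicity and constant preserving give $|\mathbb{\hat{F}}_{X_{n}}[\varphi_{k}]|\leq\|\varphi_{k}\|_{\infty}$ for all $n,k$, so each scalar sequence $\{\mathbb{\hat{F}}_{X_{n}}[\varphi_{k}]\}_{n}$ is bounded; a standard diagonal argument then yields a subsequence $\{X_{n_{i}}\}$ such that $\lim_{i}\mathbb{\hat{F}}_{X_{n_{i}}}[\varphi_{k}]$ exists for every $k$.

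The decisive step is to upgrade this to convergence of $\mathbb{\hat{F}}_{X_{n_{i}}}[\varphi]$ for arbitrary $\varphi$, and this is precisely where tightness of $\mathbb{\hat{F}}$ is used. Fix $\varphi$ and $\varepsilon>0$, and set $C:=2\|\varphi\|_{\infty}+1$. By tightness there are $N>0$ and $g\in C_{b.Lip}(\mathbb{R}^{d})$ with $I_{\{|x|\geq N\}}\leq g$ (so $g\geq0$ automatically) and $\mathbb{\hat{F}}[g]<\varepsilon$. Choosing $\varphi_{k}$ from the dense family with $\sup_{|x|\leq N}|\varphi-\varphi_{k}|<\varepsilon$ and $\|\varphi_{k}\|_{\infty}\leq\|\varphi\|_{\infty}+1$, one checks pointwise that $|\varphi-\varphi_{k}|\leq\varepsilon+Cg$: on $\{|x|<N\}$ this follows from $g\geq0$, and on $\{|x|\geq N\}$ from $g\geq1$ and $|\varphi-\varphi_{k}|\leq C$. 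The domination property together with monotonicity, sub-additivity and positive homogeneity of $\mathbb{\hat{F}}$ then gives, for all $i$,
\[
\mathbb{\hat{F}}_{X_{n_{i}}}[\varphi]-\mathbb{\hat{F}}_{X_{n_{i}}}[\varphi_{k}]\leq\mathbb{\hat{F}}[\varphi-\varphi_{k}]\leq\mathbb{\hat{F}}[\,|\varphi-\varphi_{k}|\,]\leq\mathbb{\hat{F}}[\varepsilon+Cg]\leq\varepsilon+C\mathbb{\hat{F}}[g]<(1+C)\varepsilon,
\]
and the same computation with $\varphi$ and $\varphi_{k}$ interchanged yields $|\mathbb{\hat{F}}_{X_{n_{i}}}[\varphi]-\mathbb{\hat{F}}_{X_{n_{i}}}[\varphi_{k}]|<(1+C)\varepsilon$ uniformly in $i$. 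Combining this with the triangle inequality and the already-established convergence of $\{\mathbb{\hat{F}}_{X_{n_{i}}}[\varphi_{k}]\}_{i}$ shows $\limsup_{i,j}|\mathbb{\hat{F}}_{X_{n_{i}}}[\varphi]-\mathbb{\hat{F}}_{X_{n_{j}}}[\varphi]|\leq2(1+C)\varepsilon$; since $C$ is fixed and $\varepsilon>0$ is arbitrary, $\{\mathbb{\hat{F}}_{X_{n_{i}}}[\varphi]\}_{i}$ is Cauchy for every $\varphi$, i.e.\ $\{X_{n_{i}}\}$ converges in distribution.

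I expect the construction and verification of the countable dense family to be the principal obstacle, precisely because $C_{b.Lip}(\mathbb{R}^{d})$ is \emph{not} separable for the uniform norm on all of $\mathbb{R}^{d}$, so an arbitrary $\varphi$ cannot be approximated uniformly on $\mathbb{R}^{d}$ by countably many functions. The role of tightness is exactly to repair this: it confines the relevant ``mass'' to a compact ball $\{|x|\leq N\}$ up to an error measured by $\mathbb{\hat{F}}[g]<\varepsilon$, where uniform approximation does hold by separability of $C(\overline{B}_{N})$. Keeping the approximation uniform across the entire subsequence (rather than for one fixed $n$) is what forces the use of the domination inequality, and this interplay between domination and tightness is the technical heart of the argument.
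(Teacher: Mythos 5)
Your proof is correct. Note that the paper itself gives no proof of this theorem --- it is quoted from Peng's tightness paper \cite{P8} --- and your argument (diagonal extraction over a countable family of Lipschitz functions dense for uniform convergence on compacts, then using the dominating tight sublinear expectation $\mathbb{\hat{F}}$ via the pointwise bound $|\varphi-\varphi_{k}|\leq\varepsilon+Cg$ to pass to arbitrary $\varphi\in C_{b.Lip}(\mathbb{R}^{d})$) is essentially the standard proof from that reference; in particular you correctly use only monotonicity and constant preserving for the $\mathbb{\hat{F}}_{X_{n}}$, reserving sub-additivity and positive homogeneity for the dominating $\mathbb{\hat{F}}$, which is exactly where the hypothesis places them.
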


The following definition of independence is fundamental in nonlinear
expectation theory.

\begin{definition}
Let $(\Omega,\mathcal{H},\mathbb{\hat{E}})$ be a nonlinear expectation space.
A $d$-dimensional random vector $Y$ is said to be independent from another
$m$-dimensional random vector $X$ under $\mathbb{\hat{E}}[\cdot]$ if, for each
test function $\varphi \in C_{b.Lip}(\mathbb{R}^{m+d})$, we have%
\[
\mathbb{\hat{E}}[\varphi(X,Y)]=\mathbb{\hat{E}}[\mathbb{\hat{E}}%
[\varphi(x,Y)]_{x=X}].
\]

\end{definition}

\begin{remark}
It is important to note that \textquotedblleft$Y$ is independent from $X$"
does not imply that \textquotedblleft$X$ is independent from $Y$" (see
\cite{H1}).
\end{remark}

A family of $d$-dimensional random vectors $(X_{t})_{t\geq0}$ on the same
nonlinear expectation space $(\Omega,\mathcal{H},\mathbb{\hat{E}})$ is called
a $d$-dimensional stochastic process.

\begin{definition}
Two $d$-dimensional processes $(X_{t})_{t\geq0}$ and $(Y_{t})_{t\geq0}$
defined respectively on nonlinear expectation spaces $(\Omega_{1}%
,\mathcal{H}_{1},\mathbb{\hat{E}}_{1})$ and $(\Omega_{2},\mathcal{H}%
_{2},\mathbb{\hat{E}}_{2})$ are called identically distributed, denoted by
$(X_{t})_{t\geq0}\overset{d}{=}(Y_{t})_{t\geq0}$, if for each $n\in \mathbb{N}%
$, $0\leq t_{1}<\cdots<t_{n}$, $(X_{t_{1}},\ldots,X_{t_{n}})\overset{d}%
{=}(Y_{t_{1}},\ldots,Y_{t_{n}})$, i.e.,%
\[
\mathbb{\hat{E}}_{1}[\varphi(X_{t_{1}},\ldots,X_{t_{n}})]=\mathbb{\hat{E}}%
_{2}[\varphi(Y_{t_{1}},\ldots,Y_{t_{n}})]\text{ for each }\varphi \in
C_{b.Lip}(\mathbb{R}^{n\times d}).
\]

\end{definition}

\begin{definition}
A $d$-dimensional process $(X_{t})_{t\geq0}$ with $X_{0}=0$ on a nonlinear
expectation space $(\Omega,\mathcal{H},\mathbb{\hat{E}})$ is said to have
independent increments if, for each $0\leq t_{1}<\cdots<t_{n}$, $X_{t_{n}%
}-X_{t_{n-1}}$ is independent from $(X_{t_{1}},\ldots,X_{t_{n-1}})$. A
$d$-dimensional process $(X_{t})_{t\geq0}$ with $X_{0}=0$ is said to have
stationary increments if, for each $t$, $s\geq0$, $X_{t+s}-X_{s}\overset{d}%
{=}X_{t}$.
\end{definition}

We give a typical example of process with stationary and independent increments.

\begin{example}
Let $\Gamma$ be a given bounded subset in $\mathbb{R}^{d\times d}$, where
$\mathbb{R}^{d\times d}$ denotes the set of all $d\times d$ matrices. Define
$G:\mathbb{S}(d)\rightarrow \mathbb{R}$ by%
\[
G(A)=\frac{1}{2}\sup_{Q\in \Gamma}\mathrm{tr}[AQQ^{T}]\text{ for each }%
A\in \mathbb{S}(d),
\]
where $\mathbb{S}(d)$ denotes the set of all $d\times d$ symmetric matrices. A
$d$-dimensional process $(B_{t})_{t\geq0}$ on a sublinear expectation space
$(\Omega,\mathcal{H},\mathbb{\hat{E}})$ is called a $G$-Brownian motion if the
following properties are satisfied:

\begin{description}
\item[(1)] $B_{0}=0$;

\item[(2)] It is a process with independent increments;

\item[(3)] For each $t$, $s\geq0$, $\mathbb{\hat{E}}[\varphi(B_{t+s}%
-B_{s})]=u^{\varphi}(t,0)$ for each $\varphi \in C_{b.Lip}(\mathbb{R}^{d})$,
where $u^{\varphi}$ is the viscosity solution of the following $G$-heat
equation:%
\[
\left \{
\begin{array}
[c]{l}%
\partial_{t}u(t,x)-G(D_{x}^{2}u(t,x))=0,\\
u(0,x)=\varphi(x).
\end{array}
\right.
\]

\end{description}

Obviously, (3) implies that the process $(B_{t})_{t\geq0}$ has stationary increments.
\end{example}

\section{Main result}

Let $(M_{t})_{t\geq0}$ and $(N_{t})_{t\geq0}$ be two $d$-dimensional processes
with independent increments defined respectively on nonlinear (resp.
sublinear) expectation spaces $(\Omega_{1},\mathcal{H}_{1},\mathbb{\hat{E}%
}_{1})$ and $(\Omega_{2},\mathcal{H}_{2},\mathbb{\hat{E}}_{2})$. We need the
following assumption:

\begin{description}
\item[(A)] There exist two sublinear expectations $\mathbb{\tilde{E}}%
_{1}:\mathcal{H}_{1}\rightarrow \mathbb{R}$ and $\mathbb{\tilde{E}}%
_{2}:\mathcal{H}_{2}\rightarrow \mathbb{R}$ satisfying:

\item  (1) $\mathbb{\tilde{E}}_{1}$ and $\mathbb{\tilde{E}}_{2}$ dominate
$\mathbb{\hat{E}}_{1}$ and $\mathbb{\hat{E}}_{2}$ respectively;

\item  (2) For each $t\geq0$,%
\[
\lim_{s\rightarrow t}(\mathbb{\tilde{E}}_{1}[|M_{s}-M_{t}|]+\mathbb{\tilde{E}%
}_{2}[|N_{s}-N_{t}|])=0.
\]

\end{description}

\begin{remark}
If $\mathbb{\hat{E}}_{1}$ and $\mathbb{\hat{E}}_{2}$ are sublinear
expectations, then we can get
\[
\lim_{s\rightarrow t}(\mathbb{\hat{E}}_{1}[|M_{s}-M_{t}|]+\mathbb{\hat{E}}%
_{2}[|N_{s}-N_{t}|])=0
\]
by $\mathbb{\hat{E}}_{1}[\cdot]\leq \mathbb{\tilde{E}}_{1}[\cdot]$ and
$\mathbb{\hat{E}}_{2}[\cdot]\leq \mathbb{\tilde{E}}_{2}[\cdot]$. So we can
replace $\mathbb{\tilde{E}}_{1}$ and $\mathbb{\tilde{E}}_{2}$ by
$\mathbb{\hat{E}}_{1}$ and $\mathbb{\hat{E}}_{2}$.
\end{remark}

Now we give our main theorem.

\begin{theorem}
\label{th3.1}Let $(M_{t})_{t\geq0}$ and $(N_{t})_{t\geq0}$ be two
$d$-dimensional processes with independent increments defined respectively on
nonlinear (resp. sublinear) expectation spaces $(\Omega_{1},\mathcal{H}%
_{1},\mathbb{\hat{E}}_{1})$ and $(\Omega_{2},\mathcal{H}_{2},\mathbb{\hat{E}%
}_{2})$ satisfying the assumption (A). Then there exists a $2d$-dimensional
process $(\tilde{M}_{t},\tilde{N}_{t})_{t\geq0}$ with independent increments
defined on a nonlinear (resp. sublinear) expectation space $(\Omega
,\mathcal{H},\mathbb{\hat{E}})$ such that $(\tilde{M}_{t})_{t\geq0}\overset
{d}{=}(M_{t})_{t\geq0}$ and $(\tilde{N}_{t})_{t\geq0}\overset{d}{=}%
(N_{t})_{t\geq0}$. Furthermore, $(\tilde{M}_{t},\tilde{N}_{t})_{t\geq0}$ is a
process with stationary and independent increments if $(M_{t})_{t\geq0}$ and
$(N_{t})_{t\geq0}$ are two processes with stationary and independent increments.
\end{theorem}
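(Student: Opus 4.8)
The plan is to realize the space as $\Omega=\Omega_{1}\times\Omega_{2}$ with $\tilde{M}_{t}(\omega)=M_{t}(\omega_{1})$, $\tilde{N}_{t}(\omega)=N_{t}(\omega_{2})$, and to take $\mathcal{H}$ to be the set of random variables of the form $\varphi(\tilde{M}_{t_{1}},\tilde{N}_{t_{1}},\ldots,\tilde{M}_{t_{k}},\tilde{N}_{t_{k}})$ with $\varphi\in C_{b.Lip}$. For each $n$ I would first build an auxiliary functional $\mathbb{\hat{E}}^{n}$ on $\mathcal{H}$ by a backward iterated integration along the dyadic grid $\mathcal{D}_{n}$: after refining the times so they all lie on $\mathcal{D}_{n}$, I process the consecutive grid intervals from the last to the first, and over each interval I integrate out the increment pair by applying $\mathbb{\hat{E}}_{2}$ (to the $N$-increment) inside $\mathbb{\hat{E}}_{1}$ (to the $M$-increment), keeping all earlier increments frozen as parameters. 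Because $M$ and $N$ already have independent increments under $\mathbb{\hat{E}}_{1}$ and $\mathbb{\hat{E}}_{2}$, each interval's increment may legitimately be integrated with its past frozen, and the nesting places every increment strictly inside its own past. I would then check the routine facts: $\mathbb{\hat{E}}^{n}$ is a nonlinear (resp.\ sublinear) expectation; a functional of the $\tilde{M}$'s alone is unaffected by the $\mathbb{\hat{E}}_{2}$-steps (they act on $\Omega_{1}$-constants), so $(\tilde{M}_{t})_{t\in\mathcal{D}_{n}}\overset{d}{=}(M_{t})_{t\in\mathcal{D}_{n}}$, and symmetrically for $\tilde{N}$; and, by the very order of the nesting together with the independence identity $\mathbb{\hat{E}}[\varphi(X,Y)]=\mathbb{\hat{E}}[\mathbb{\hat{E}}[\varphi(x,Y)]_{x=X}]$, the process $(\tilde{M}_{t},\tilde{N}_{t})_{t\in\mathcal{D}_{n}}$ has independent increments under $\mathbb{\hat{E}}^{n}$.

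Second, I would pass to the limit in $n$. Since $\mathbb{\tilde{E}}_{1}$ and $\mathbb{\tilde{E}}_{2}$ are sublinear and dominate $\mathbb{\hat{E}}_{1},\mathbb{\hat{E}}_{2}$, the same iterated construction applied to $\mathbb{\tilde{E}}_{1},\mathbb{\tilde{E}}_{2}$ produces a single sublinear expectation dominating every $\mathbb{\hat{E}}^{n}$; together with the finiteness and continuity supplied by assumption (A)(2), this shows that the family of distributions $\{\mathbb{\hat{F}}^{n}_{X}:n\geq1\}$ of each vector $X=(\tilde{M}_{t_{1}},\tilde{N}_{t_{1}},\ldots,\tilde{M}_{t_{k}},\tilde{N}_{t_{k}})$ is tight, so Theorem~\ref{th2.1} yields subsequential limits in distribution. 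To upgrade this to convergence of the full sequence, and hence to define $\mathbb{\hat{E}}[\,\cdot\,]:=\lim_{n}\mathbb{\hat{E}}^{n}[\,\cdot\,]$, I would show the subsequential limit is unique by estimating $|\mathbb{\hat{E}}^{n}[\xi]-\mathbb{\hat{E}}^{m}[\xi]|$. The essential mechanism is that refining the mesh only alters the order in which the $M$- and $N$-increments over a given interval are integrated, and swapping the inner/outer role of $\Delta M$ and $\Delta N$ over one interval costs a quantity controlled, through the Lipschitz constant of $\varphi$, by the increment sizes $\mathbb{\tilde{E}}_{1}[|\Delta M|]+\mathbb{\tilde{E}}_{2}[|\Delta N|]$ there; by (A)(2) this modulus tends to $0$ uniformly as the mesh shrinks, forcing the sequence to be Cauchy.

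Third, with $\mathbb{\hat{E}}$ in hand, the defining properties of a nonlinear (resp.\ sublinear) expectation pass to the limit, so $(\Omega,\mathcal{H},\mathbb{\hat{E}})$ is a nonlinear (resp.\ sublinear) expectation space. The marginal identities $(\tilde{M}_{t})_{t\geq0}\overset{d}{=}(M_{t})_{t\geq0}$ and $(\tilde{N}_{t})_{t\geq0}\overset{d}{=}(N_{t})_{t\geq0}$ hold at each level $n$ and therefore in the limit. For independent increments under $\mathbb{\hat{E}}$ I would fix arbitrary times $0\leq t_{1}<\cdots<t_{k}$, approximate them from within $\bigcup_{n}\mathcal{D}_{n}$ using the continuity from (A)(2) to control the error, invoke the grid-level independence of $\mathbb{\hat{E}}^{n}$, and let $n\to\infty$; since the independence identity is stable under the two limits (dyadic approximation of the times and $n\to\infty$), the increment $(\tilde{M}_{t_{k}}-\tilde{M}_{t_{k-1}},\tilde{N}_{t_{k}}-\tilde{N}_{t_{k-1}})$ is independent from $(\tilde{M}_{t_{1}},\tilde{N}_{t_{1}},\ldots,\tilde{M}_{t_{k-1}},\tilde{N}_{t_{k-1}})$ under $\mathbb{\hat{E}}$. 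Finally, if $M$ and $N$ have stationary increments then each $\mathbb{\hat{E}}^{n}$ respects the stationarity on the grid and the property survives the limit, giving stationary increments for $(\tilde{M}_{t},\tilde{N}_{t})_{t\geq0}$.

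The main obstacle is the second step. The functionals $\mathbb{\hat{E}}^{n}$ are genuinely inconsistent, precisely because independence is not mutual under a nonlinear expectation, so convergence cannot be taken for granted and must be extracted. Tightness via Theorem~\ref{th2.1} only supplies convergent subsequences; the real work is the uniform-in-mesh estimate showing that the asymmetry between integrating $\Delta N$ first and integrating $\Delta M$ first over each small interval is of lower order than that interval's contribution, so that its cumulative effect vanishes as the mesh shrinks. This is exactly where assumption (A)(2) is indispensable, and it is the quantitative heart that converts the one-sided product construction into a symmetric limit carrying genuine independent increments.
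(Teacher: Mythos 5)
Your first and third steps coincide with the paper's: the product space $\Omega_{1}\times\Omega_{2}$, the backward iterated definition of $\mathbb{\hat{E}}^{n}$ with $\mathbb{\hat{E}}_{2}$ nested inside $\mathbb{\hat{E}}_{1}$ on each dyadic interval, the grid-level marginals and independence, and the final passage from dyadic to arbitrary times using assumption (A)(2) are all exactly what the paper does. The gap is precisely at what you call the quantitative heart: your claim that the full sequence $\{\mathbb{\hat{E}}^{n}[\xi]\}_{n}$ is Cauchy. The swap estimate you invoke --- that exchanging the inner/outer roles of $\Delta M$ and $\Delta N$ on one interval costs at most $CL_{\varphi}(\mathbb{\tilde{E}}_{1}[|\Delta M|]+\mathbb{\tilde{E}}_{2}[|\Delta N|])$ --- is correct per interval, but it is of \emph{first} order in the increment, not lower order. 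Summing over the $2^{n}$ intervals of the partition yields $CL_{\varphi}\sum_{i}(\mathbb{\tilde{E}}_{1}[|M_{i\delta_{n}}-M_{(i-1)\delta_{n}}|]+\mathbb{\tilde{E}}_{2}[|N_{i\delta_{n}}-N_{(i-1)\delta_{n}}|])$, and assumption (A)(2) gives no control on this sum as the mesh shrinks; for $G$-Brownian motion each term is of order $\sqrt{\delta_{n}}$ and the sum grows like $2^{n/2}$. Moreover, refining the mesh does not merely swap the order on each old interval --- it interleaves the $M$- and $N$-increments of the two halves into a deeper nesting --- so there is no single ``swap'' to estimate. No argument for full-sequence convergence is supplied, and the paper pointedly does not claim one.

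The paper sidesteps this entirely. It never shows consistency or Cauchyness of $\{\mathbb{\hat{E}}^{n}\}$; instead, Lemma \ref{le3.3} proves that for each fixed $n$ the distributions $\{\mathbb{\hat{F}}_{k}^{n}:k\geq n\}$ of the level-$n$ increment vector under the finer expectations $\mathbb{\hat{E}}^{k}$ are tight (bounding $\mathbb{\hat{F}}_{k}^{n}[\varphi]$ by $\frac{1}{N-1}(\mathbb{\hat{E}}_{1}[\sum_{i}|M_{i\delta_{n}}-M_{(i-1)\delta_{n}}|]+\mathbb{\hat{E}}_{2}[\sum_{i}|N_{i\delta_{n}}-N_{(i-1)\delta_{n}}|])$ via the marginal identities, uniformly in $k$), applies Theorem \ref{th2.1} at each level, and extracts a diagonal subsequence $\{k_{j}^{j}\}$ so that $\mathbb{\hat{E}}[Y]:=\lim_{j}\mathbb{\hat{E}}^{k_{j}^{j}}[Y]$ exists on $\mathcal{L}=\bigcup_{n}\mathcal{H}^{n}$. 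A subsequential limit suffices: it inherits the grid-level independence (after the uniform-on-compacts argument for $\psi_{j}\rightarrow\psi$ in Lemma \ref{le3.4}) and the marginals, and canonicity of the limit is not needed for the existence statement. Replacing your Cauchy step by this tightness-plus-diagonal extraction repairs the proof; the rest of your outline then goes through. Two minor further points: the paper first reduces to $t\in[0,1]$ by gluing unit-length pieces with Peng's product construction, whereas you work on all of $[0,\infty)$ without saying how tightness is obtained there; and your proposed dominating sublinear expectation (the iterated construction applied to $\mathbb{\tilde{E}}_{1},\mathbb{\tilde{E}}_{2}$) is itself mesh-dependent and not obviously a single dominating functional for all $n$.
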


In the following, we only prove the sublinear expectation case. The nonlinear
expectation case can be proved by the same method. Moreover, the following
lemma shows that we only need to prove the theorem for $t\in \lbrack0,1]$.

\begin{lemma}
Let $(X_{t}^{i},Y_{t}^{i})_{t\in \lbrack0,1]}$, $i\geq0$, be a sequence of
$2d$-dimensional processes with independent increments defined respectively on
sublinear expectation spaces $(\bar{\Omega}_{i},\mathcal{\bar{H}}%
_{i},\mathbb{\bar{E}}_{i})$ such that $(X_{t}^{i})_{t\in \lbrack0,1]}%
\overset{d}{=}(M_{i+t}-M_{i})_{t\in \lbrack0,1]}$ and $(Y_{t}^{i})_{t\in
\lbrack0,1]}\overset{d}{=}(N_{i+t}-N_{i})_{t\in \lbrack0,1]}$. Then there
exists a $2d$-dimensional process $(\tilde{M}_{t},\tilde{N}_{t})_{t\geq0}$
with independent increments defined on a sublinear expectation space
$(\Omega,\mathcal{H},\mathbb{\hat{E}})$ such that $(\tilde{M}_{t})_{t\geq
0}\overset{d}{=}(M_{t})_{t\geq0}$ and $(\tilde{N}_{t})_{t\geq0}\overset{d}%
{=}(N_{t})_{t\geq0}$.
\end{lemma}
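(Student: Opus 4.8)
The plan is to realise $(\tilde M_{t},\tilde N_{t})_{t\ge0}$ on the countable product of the block spaces and to glue the pieces by Peng's product-space construction, integrating block-by-block with the highest index innermost so that each block becomes independent from all earlier ones. Concretely I set $\Omega=\prod_{i\ge0}\bar\Omega_{i}$, take $\mathcal H$ to be the cylinder functions $\varphi(\eta_{0},\dots,\eta_{k})$ with $\varphi\in C_{b.Lip}$ and each $\eta_{i}$ a random vector assembled from finitely many values $X^{i}_{s},Y^{i}_{s}$ of the $i$-th factor, and define
\[
\mathbb{\hat{E}}[\varphi(\eta_{0},\dots,\eta_{k})]=\mathbb{\bar{E}}_{0}\big[\mathbb{\bar{E}}_{1}[\cdots \mathbb{\bar{E}}_{k}[\varphi(x_{0},\dots,x_{k-1},\eta_{k})]\cdots]\big],
\]
where after each inner integration over $\bar\Omega_{j}$ the result is a $C_{b.Lip}$ function of the frozen parameters $x_{0},\dots,x_{j-1}$, which are then replaced by $\eta_{0},\dots,\eta_{j-1}$ and integrated further out. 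That this is a well-defined sublinear expectation, independent of the chosen cylinder representation, is the standard product-space verification and I would only sketch it. Writing $t=[t]+s$ with integer part $[t]$ and $s\in[0,1)$, I put $\tilde M_{t}=\sum_{j=0}^{[t]-1}X^{j}_{1}+X^{[t]}_{s}$ and $\tilde N_{t}=\sum_{j=0}^{[t]-1}Y^{j}_{1}+Y^{[t]}_{s}$; since $X^{i}_{0}=Y^{i}_{0}=0$ the definition is consistent at integer times and $\tilde M_{0}=\tilde N_{0}=0$.

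The next step is the marginal identity $(\tilde M_{t})_{t\ge0}\overset{d}{=}(M_{t})_{t\ge0}$, and symmetrically for $\tilde N$. Fixing $0\le t_{1}<\cdots<t_{m}$ touching blocks $0,\dots,K$, I compute $\mathbb{\hat{E}}[\varphi(\tilde M_{t_{1}},\dots,\tilde M_{t_{m}})]$ by the nested integration above; the $Y$-integrations are harmless because $\varphi$ does not see them, and on block $i$ the hypothesis $(X^{i}_{\cdot})\overset{d}{=}(M_{i+\cdot}-M_{i})$ lets me replace $\mathbb{\bar{E}}_{i}$ by $\mathbb{\hat{E}}_{1}$ acting on the matching increment, after writing $\tilde M_{t}=\tilde M_{[t]}+X^{[t]}_{s}$ against $M_{t}=M_{[t]}+(M_{t}-M_{[t]})$. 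Since $(M_{t})$ has independent increments, the increment of $M$ over block $K$ is independent from the past, so the innermost integration I perform on the product side is exactly the innermost conditioning that computes $\mathbb{\hat{E}}_{1}[\varphi(M_{t_{1}},\dots,M_{t_{m}})]$; peeling off one block at a time and inducting on $K$ yields the equality.

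The decisive step is that $(\tilde M_{t},\tilde N_{t})_{t\ge0}$ has independent increments, i.e.\ that for $0\le t_{1}<\cdots<t_{n}$ the pair $\Delta=(\tilde M_{t_{n}}-\tilde M_{t_{n-1}},\tilde N_{t_{n}}-\tilde N_{t_{n-1}})$ is independent from $W=\big((\tilde M_{t_{1}},\tilde N_{t_{1}}),\dots,(\tilde M_{t_{n-1}},\tilde N_{t_{n-1}})\big)$. Set $p=[t_{n-1}]$, $r=t_{n-1}-p$, and split $\Delta=\Delta_{p}+\Delta_{>p}$, where $\Delta_{p}=(X^{p}_{1}-X^{p}_{r},Y^{p}_{1}-Y^{p}_{r})$ lives on block $p$ and $\Delta_{>p}$ collects the contributions of blocks $p+1,\dots,[t_{n}]$. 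Two mechanisms then combine. Since $\Delta_{>p}$ depends only on blocks of index $>p$, carrying out the innermost integrations of the product expectation (over exactly those blocks) turns $\varphi(W,\Delta_{p}+\Delta_{>p})$ into a $C_{b.Lip}$ function $h(W,\Delta_{p})$ of block-$\le p$ data, using that the product construction makes blocks $>p$ independent from blocks $0,\dots,p$; consequently $g(w):=\mathbb{\hat{E}}[\varphi(w,\Delta)]=\mathbb{\bar{E}}_{p}[h(w,\Delta_{p})]$. Next, $W$ involves only the values $X^{p}_{s},Y^{p}_{s}$ with $s\le r$, whereas $\Delta_{p}$ is the increment of $(X^{p},Y^{p})$ from $r$ to $1$; because $(X^{p}_{t},Y^{p}_{t})_{t\in[0,1]}$ has independent increments by hypothesis, $\Delta_{p}$ is independent from that block-$p$ part of $W$, so integrating block $p$ innermost among the blocks $\le p$ converts $\mathbb{\bar{E}}_{p}[h(W,\Delta_{p})]$ into $g(W)$ with the lower blocks frozen. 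Integrating the remaining blocks out gives $\mathbb{\hat{E}}[\varphi(W,\Delta)]=\mathbb{\hat{E}}[g(W)]=\mathbb{\hat{E}}[\mathbb{\hat{E}}[\varphi(w,\Delta)]_{w=W}]$, which is precisely the required independence. I expect the treatment of the boundary block $p$ — reconciling the within-block independence of the given pair with the cross-block independence furnished by the product expectation when the increment straddles an integer — to be the main obstacle; away from that block the product construction does all the work.
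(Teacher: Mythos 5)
Your construction is exactly the paper's: the countable product space $\prod_{i\ge 0}\bar{\Omega}_{i}$ with Peng's nested (innermost-last-factor) product expectation, and $\tilde{M}_{t}=\sum_{j=0}^{[t]-1}X_{1}^{j}+X_{t-[t]}^{[t]}$, $\tilde{N}_{t}=\sum_{j=0}^{[t]-1}Y_{1}^{j}+Y_{t-[t]}^{[t]}$. The only difference is that the paper delegates the verification of the independent-increments and distributional identities to Proposition 3.15 in Chapter I of Peng's notes, whereas you carry out that verification (splitting the increment at the boundary block $p=[t_{n-1}]$ and combining cross-block independence from the product construction with within-block independence from the hypothesis) by hand; your argument is correct.
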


\begin{proof}
Set $(\Omega,\mathcal{H},\mathbb{\hat{E}})=(\Pi_{i=0}^{\infty}\bar{\Omega}%
_{i},\otimes_{i=0}^{\infty}\mathcal{\bar{H}}_{i},\otimes_{i=0}^{\infty
}\mathbb{\bar{E}}_{i})$ which is the product space of $\{(\bar{\Omega}%
_{i},\mathcal{\bar{H}}_{i},\mathbb{\bar{E}}_{i}):i\geq0\}$ (see \cite{P7}).
For each $\omega=(\omega_{i})_{i=0}^{\infty}$, define%
\[
\tilde{M}_{t}(\omega)=\sum_{i=0}^{[t]-1}X_{1}^{i}(\omega_{i})+X_{t-[t]}%
^{[t]}(\omega_{\lbrack t]}),\text{ }\tilde{N}_{t}(\omega)=\sum_{i=0}%
^{[t]-1}Y_{1}^{i}(\omega_{i})+Y_{t-[t]}^{[t]}(\omega_{\lbrack t]}).
\]
By Proposition 3.15 in Chapter I in \cite{P7}, we can easily obtain that
$(\tilde{M}_{t},\tilde{N}_{t})_{t\geq0}$ has independent increments property,
$(\tilde{M}_{t})_{t\geq0}\overset{d}{=}(M_{t})_{t\geq0}$ and $(\tilde{N}%
_{t})_{t\geq0}\overset{d}{=}(N_{t})_{t\geq0}$.
\end{proof}

Set $\Omega=\Omega_{1}\times \Omega_{2}=\{ \omega=(\omega_{1},\omega
_{2}):\omega_{1}\in \Omega_{1},\omega_{2}\in \Omega_{2}\}$. For each
$\omega=(\omega_{1},\omega_{2})\in \Omega$, define%
\[
\tilde{M}_{t}(\omega)=M_{t}(\omega_{1}),\text{ }\tilde{N}_{t}(\omega
)=N_{t}(\omega_{2})\text{ for }t\in \lbrack0,1].
\]
For notation simplicity, we denote $X_{t}=(\tilde{M}_{t},\tilde{N}_{t})$.
Define the space of random variables as follows:%
\[%
\begin{array}
[c]{r}%
\mathcal{H}=\{ \varphi(X_{t_{1}},X_{t_{2}}-X_{t_{1}},\ldots,X_{t_{n}%
}-X_{t_{n-1}}):\forall n\geq1,\forall0\leq t_{1}<t_{2}<\cdots<t_{n}\leq1,\\
\forall \varphi \in C_{b.Lip}(\mathbb{R}^{n\times2d})\}.
\end{array}
\]
In the following, we will construct a sublinear expectation $\mathbb{\hat{E}%
}:\mathcal{H}\rightarrow \mathbb{R}$ such that $(\tilde{M}_{t})_{t\in
\lbrack0,1]}\overset{d}{=}(M_{t})_{t\in \lbrack0,1]}$, $(\tilde{N}_{t}%
)_{t\in \lbrack0,1]}\overset{d}{=}(N_{t})_{t\in \lbrack0,1]}$ and $(\tilde
{M}_{t},\tilde{N}_{t})_{t\in \lbrack0,1]}$ possessing independent increments.
In order to construct $\mathbb{\hat{E}}$, we set, for each fixed $n\geq1$,%
\[
\mathcal{H}^{n}=\{ \varphi(X_{\delta_{n}},X_{2\delta_{n}}-X_{\delta_{n}%
},\ldots,X_{2^{n}\delta_{n}}-X_{(2^{n}-1)\delta_{n}}):\forall \varphi \in
C_{b.Lip}(\mathbb{R}^{2^{n}\times2d})\},
\]
where $\delta_{n}=2^{-n}$. Define $\mathbb{\hat{E}}^{n}:\mathcal{H}%
^{n}\rightarrow \mathbb{R}$ as follows:

Step 1. For each given $\phi(X_{k\delta_{n}}-X_{(k-1)\delta_{n}})=\phi
(\tilde{M}_{k\delta_{n}}-\tilde{M}_{(k-1)\delta_{n}},\tilde{N}_{k\delta_{n}%
}-\tilde{N}_{(k-1)\delta_{n}})\in \mathcal{H}^{n}$ with $k\leq2^{n}$ and
$\phi \in C_{b.Lip}(\mathbb{R}^{2d})$, define%
\[
\mathbb{\hat{E}}^{n}[\phi(X_{k\delta_{n}}-X_{(k-1)\delta_{n}})]=\mathbb{\hat
{E}}_{1}[\psi(M_{k\delta_{n}}-M_{(k-1)\delta_{n}})],
\]
where%
\[
\psi(x)=\mathbb{\hat{E}}_{2}[\phi(x,N_{k\delta_{n}}-N_{(k-1)\delta_{n}%
})]\text{ for each }x\in \mathbb{R}^{d}.
\]

Step 2. For each given $\varphi(X_{\delta_{n}},X_{2\delta_{n}}-X_{\delta_{n}%
},\ldots,X_{2^{n}\delta_{n}}-X_{(2^{n}-1)\delta_{n}})\in \mathcal{H}^{n}$ with
$\varphi \in C_{b.Lip}(\mathbb{R}^{2^{n}\times2d})$, define%
\[
\mathbb{\hat{E}}^{n}[\varphi(X_{\delta_{n}},X_{2\delta_{n}}-X_{\delta_{n}%
},\ldots,X_{2^{n}\delta_{n}}-X_{(2^{n}-1)\delta_{n}})]=\varphi_{0},
\]
where $\varphi_{0}$ is obtained backwardly by Step 1 in the following sense:
\[
\varphi_{2^{n}-1}(x_{1},x_{2},\ldots,x_{2^{n}-1})=\mathbb{\hat{E}}^{n}%
[\varphi(x_{1},x_{2},\ldots,x_{2^{n}-1},X_{2^{n}\delta_{n}}-X_{(2^{n}%
-1)\delta_{n}})],
\]%
\[
\varphi_{2^{n}-2}(x_{1},x_{2},\ldots,x_{2^{n}-2})=\mathbb{\hat{E}}^{n}%
[\varphi_{2^{n}-1}(x_{1},x_{2},\ldots,x_{2^{n}-2},X_{(2^{n}-1)\delta_{n}%
}-X_{(2^{n}-2)\delta_{n}})],
\]%
\[
\vdots
\]%
\[
\varphi_{1}(x_{1})=\mathbb{\hat{E}}^{n}[\varphi_{2}(x_{1},X_{2\delta_{n}%
}-X_{\delta_{n}})],
\]%
\[
\varphi_{0}=\mathbb{\hat{E}}^{n}[\varphi_{1}(X_{\delta_{n}})].
\]

\begin{lemma}
\label{le3.2}Let $(\Omega,\mathcal{H}^{n},\mathbb{\hat{E}}^{n})$ be defined as
above. Then

\begin{description}
\item[(1)] $(\Omega,\mathcal{H}^{n},\mathbb{\hat{E}}^{n})$ forms a sublinear
expectation space;

\item[(2)] For each $2\leq k\leq2^{n}$, $X_{k\delta_{n}}-X_{(k-1)\delta_{n}}$
is independent from $(X_{\delta_{n}},\ldots,X_{(k-1)\delta_{n}}-X_{(k-2)\delta
_{n}})$;

\item[(3)] $(\tilde{M}_{\delta_{n}},\tilde{M}_{2\delta_{n}}-\tilde{M}%
_{\delta_{n}},\ldots,\tilde{M}_{2^{n}\delta_{n}}-\tilde{M}_{(2^{n}%
-1)\delta_{n}})\overset{d}{=}(M_{\delta_{n}},M_{2\delta_{n}}-M_{\delta_{n}%
},\ldots,M_{2^{n}\delta_{n}}-M_{(2^{n}-1)\delta_{n}})$ and $(\tilde{N}%
_{\delta_{n}},\tilde{N}_{2\delta_{n}}-\tilde{N}_{\delta_{n}},\ldots,\tilde
{N}_{2^{n}\delta_{n}}-\tilde{N}_{(2^{n}-1)\delta_{n}})\overset{d}{=}%
(N_{\delta_{n}},N_{2\delta_{n}}-N_{\delta_{n}},\ldots,N_{2^{n}\delta_{n}%
}-N_{(2^{n}-1)\delta_{n}})$.
\end{description}
\end{lemma}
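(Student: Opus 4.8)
The plan is to build $\mathbb{\hat{E}}^{n}$ out of the single--block operators attached by Step 1 and then verify the three assertions by backward induction, in the order (1), (3), (2): once (1) is in place, (2) and (3) are essentially read off from the nested structure. Throughout write $\mu_{k}=M_{k\delta_{n}}-M_{(k-1)\delta_{n}}$, $\nu_{k}=N_{k\delta_{n}}-N_{(k-1)\delta_{n}}$ and $\xi_{k}=X_{k\delta_{n}}-X_{(k-1)\delta_{n}}$, and isolate the operator
\[
\mathcal{E}_{k}[\phi]:=\mathbb{\hat{E}}_{1}\big[\mathbb{\hat{E}}_{2}[\phi(x,\nu_{k})]_{x=\mu_{k}}\big],\qquad \phi\in C_{b.Lip}(\mathbb{R}^{2d}).
\]
First I would check that $\mathcal{E}_{k}$ is a sublinear expectation on $(\mathbb{R}^{2d},C_{b.Lip}(\mathbb{R}^{2d}))$ and, crucially, that it is regularity preserving: if $\phi(z_{1},\ldots,z_{m},\cdot)\in C_{b.Lip}$ with the block variable last, then $(z_{1},\ldots,z_{m})\mapsto\mathcal{E}_{k}[\phi(z_{1},\ldots,z_{m},\cdot)]$ again lies in $C_{b.Lip}$ with sup-norm and Lipschitz constant no larger than those of $\phi$. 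Both facts are compositions of the two given sublinear expectations and follow from the elementary bound $|\mathbb{\hat{E}}_{i}[X]-\mathbb{\hat{E}}_{i}[Y]|\le\mathbb{\hat{E}}_{i}[|X-Y|]$. This makes every intermediate $\varphi_{j}$ of Step 2 a $C_{b.Lip}$ function, so the recursion is well posed, and it lets me read off constant preservation, positive homogeneity and sub-additivity of $\mathbb{\hat{E}}^{n}$ by a direct backward induction from the corresponding properties of each $\mathcal{E}_{k}$ (sub-additivity using, at each peeling step, the function-level monotonicity and sub-additivity of $\mathcal{E}_{k}$).

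The genuine difficulty is monotonicity at the level of random variables, which the definition of a sublinear expectation requires and which contains well-definedness of $\mathbb{\hat{E}}^{n}$ on $\mathcal{H}^{n}$ as the special case $\varphi(\xi)=\varphi'(\xi)$. This is not automatic: the interleaved recursion evaluates $\varphi$ on the full product $\prod_{k}\overline{\mathrm{range}(\mu_{k})}\times\prod_{k}\overline{\mathrm{range}(\nu_{k})}$ of the individual increment ranges, whereas $\varphi(\xi)\le\varphi'(\xi)$ only controls $\varphi$ on the a priori much smaller joint range $R_{M}\times R_{N}$ of $(\mu_{1},\ldots,\mu_{2^{n}})$ and $(\nu_{1},\ldots,\nu_{2^{n}})$; and since $\mathbb{\hat{E}}_{1}$ and $\mathbb{\hat{E}}_{2}$ do not commute (Fubini fails for sublinear expectations) the recursion cannot be rearranged into one joint integration. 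I expect essentially all the work to sit here, and I would close the gap using the independent-increments hypothesis as follows. Independence forces the joint distribution of $(\mu_{1},\ldots,\mu_{2^{n}})$ under $\mathbb{\hat{E}}_{1}$ to be carried by the product $\prod_{k}\overline{\mathrm{range}(\mu_{k})}$, i.e. $\overline{R_{M}}=\prod_{k}\overline{\mathrm{range}(\mu_{k})}$ (and likewise for $N$): for $\phi\ge0$ vanishing on the product, peeling off one increment at a time by the independence identity gives $\mathbb{\hat{E}}_{1}[\phi(\mu_{1},\ldots,\mu_{2^{n}})]=0$, and a product-test-function argument yields the reverse inclusion. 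Granting this, $\varphi(\xi)\le\varphi'(\xi)$ gives $\varphi\le\varphi'$ on $R_{M}\times R_{N}$, hence by continuity on its closure, which is exactly the product set the recursion sees; a backward induction using monotonicity of each $\mathcal{E}_{k}$ on that set then gives $\varphi_{0}\le\varphi_{0}'$, and the choice $\varphi'=\varphi$ yields well-definedness.

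With $\mathbb{\hat{E}}^{n}$ in hand, (2) and (3) are short. For (3), if $\varphi$ depends only on the $\tilde{M}$-coordinates then every inner $\mathbb{\hat{E}}_{2}$ in Step 1 acts on a constant and vanishes by constant preservation, so the recursion collapses to the nested integration $\mathbb{\hat{E}}_{1}^{\mu_{1}}[\cdots\mathbb{\hat{E}}_{1}^{\mu_{2^{n}}}[\phi]]$ with $\mu_{2^{n}}$ innermost; this is precisely the order in which the independent increments of $M$ peel, so it equals $\mathbb{\hat{E}}_{1}[\phi(\mu_{1},\ldots,\mu_{2^{n}})]$, giving the first identity of (3), and the $\tilde{N}$ statement is symmetric. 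For (2), fix $k$ and $\varphi\in C_{b.Lip}(\mathbb{R}^{k\times2d})$ depending only on the first $k$ blocks; the steps for blocks $k+1,\ldots,2^{n}$ again vanish, so $\mathbb{\hat{E}}^{n}[\varphi(\xi_{1},\ldots,\xi_{k})]=\mathcal{E}_{1}[\cdots\mathcal{E}_{k}[\varphi]\cdots]$. Writing $h(x):=\mathbb{\hat{E}}^{n}[\varphi(x,\xi_{k})]=\mathcal{E}_{k}[\varphi(x,\cdot)]$ for the innermost peel gives $\mathbb{\hat{E}}^{n}[\varphi(\xi_{1},\ldots,\xi_{k})]=\mathbb{\hat{E}}^{n}[h(\xi_{1},\ldots,\xi_{k-1})]$, which is exactly the identity $\mathbb{\hat{E}}^{n}[\varphi(X,Y)]=\mathbb{\hat{E}}^{n}[\mathbb{\hat{E}}^{n}[\varphi(x,Y)]_{x=X}]$ expressing that $Y=\xi_{k}$ is independent from $X=(\xi_{1},\ldots,\xi_{k-1})$, i.e. (2).
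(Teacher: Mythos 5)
Your architecture (the one--block operators $\mathcal{E}_{k}$, function-level sublinearity by backward induction, collapsing the recursion to prove (2) and (3)) is sound, and your treatment of (2) and (3) is correct and matches what the paper intends. You have also correctly located the one genuinely delicate point, namely monotonicity of $\mathbb{\hat{E}}^{n}$ at the level of random variables, which contains well-definedness. For comparison, the paper handles monotonicity by a different device: from $Y=\varphi_{1}(\xi)\geq Z=\varphi_{2}(\xi)$ it observes $Y=(\varphi_{1}\vee\varphi_{2})(\xi)$, so that well-definedness plus the \emph{everywhere}-pointwise inequality $\varphi_{1}\vee\varphi_{2}\geq\varphi_{2}$ and function-level monotonicity of the recursion give $\mathbb{\hat{E}}^{n}[Y]\geq\mathbb{\hat{E}}^{n}[Z]$; well-definedness itself is only asserted.

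The gap in your proposal is that the lemma you hang everything on --- independence of increments forces $\overline{R_{M}}=\prod_{k}\overline{\mathrm{range}(\mu_{k})}$ --- is false. Take $\Omega_{1}=\{a,b\}$, $\mathbb{\hat{E}}_{1}[X]=X(a)$ (a Dirac functional, hence sublinear), $M_{t}(a)=0$, $M_{t}(b)=t$; assumption (A) holds and every increment is independent from the preceding ones, since both sides of the independence identity reduce to evaluation at $a$. Yet all $\mu_{k}=\delta_{n}Z$ with $Z(a)=0$, $Z(b)=1$ are \emph{equal} as random variables, so the joint range is the diagonal of $\prod_{k}\{0,\delta_{n}\}$, not the product. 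Your product-test-function argument for the reverse inclusion cannot repair this: it would need $\mathbb{\hat{E}}_{1}[\phi_{k}(\mu_{k})]>0$ for a nonnegative bump $\phi_{k}$ centered at a point of $\mathrm{range}(\mu_{k})$, but a sublinear expectation of a nonnegative variable that is positive somewhere on the set-theoretic range can perfectly well be $0$ (as in this example for a bump at $\delta_{n}$). The underlying mismatch is that the hypothesis $\varphi(\xi)\leq\varphi'(\xi)$ controls $\varphi-\varphi'$ on the set-theoretic joint range, whereas what the interleaved recursion is insensitive to is a \emph{distributional} (null-set) notion of smallness; independence constrains the latter, not the former, so the continuity step from $R$ to the closed product set does not go through. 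In the example above well-definedness still holds (the off-diagonal part of the product is null for the recursion), so your conclusion is not refuted, but the argument as written does not establish part (1); it needs to be recast in terms of the identity $\mathbb{\hat{E}}_{1}[\phi(\mu_{1},\ldots,\mu_{2^{n}})]=\mathbb{\hat{E}}_{1}^{\mu_{1}}[\cdots\mathbb{\hat{E}}_{1}^{\mu_{2^{n}}}[\phi]]$ supplied by independence rather than in terms of ranges.
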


\begin{proof}
(1) It is easy to check that $\mathbb{\hat{E}}^{n}:\mathcal{H}^{n}%
\rightarrow \mathbb{R}$ is well-defined. We only prove $\mathbb{\hat{E}}^{n}$
satisfies monotonicity, the other properties can be similarly obtained. For
each given $Y=\varphi_{1}(X_{\delta_{n}},X_{2\delta_{n}}-X_{\delta_{n}}%
,\ldots,X_{2^{n}\delta_{n}}-X_{(2^{n}-1)\delta_{n}})$, $Z=\varphi
_{2}(X_{\delta_{n}},X_{2\delta_{n}}-X_{\delta_{n}},\ldots,X_{2^{n}\delta_{n}%
}-X_{(2^{n}-1)\delta_{n}})\in \mathcal{H}^{n}$ with $Y\geq Z$, it is easy to
verify that $Y=(\varphi_{1}\vee \varphi_{2})(X_{\delta_{n}},X_{2\delta_{n}%
}-X_{\delta_{n}},\ldots,X_{2^{n}\delta_{n}}-X_{(2^{n}-1)\delta_{n}})$. Then by
the definition of $\mathbb{\hat{E}}^{n}$ and the monotonicity of
$\mathbb{\hat{E}}_{1}$ and $\mathbb{\hat{E}}_{2}$, we can get $\mathbb{\hat
{E}}^{n}[Y]\geq \mathbb{\hat{E}}^{n}[Z]$. (2) and (3) can be easily obtained by
the definition of $\mathbb{\hat{E}}^{n}$.
\end{proof}

Obviously, $\mathcal{H}^{n}\subset \mathcal{H}^{n+1}$ for each $n\geq1$. We set%
\[
\mathcal{L}=\bigcup \limits_{n\geq1}\mathcal{H}^{n}.
\]
It is easily seen that $\mathcal{L}$ is a subspace of $\mathcal{H}$ such that
if $Y_{1}$,$\dots$,$Y_{m}\in \mathcal{L}$, then $\varphi(Y_{1},\dots,Y_{m}%
)\in \mathcal{L}$ for each $\varphi \in C_{b.Lip}(\mathbb{R}^{m})$. In the
following, we want to define a sublinear expectation $\mathbb{\hat{E}%
}:\mathcal{L}\rightarrow \mathbb{R}$. Unfortunately, $\mathbb{\hat{E}}%
^{n+1}[\cdot]\not =\mathbb{\hat{E}}^{n}[\cdot]$ on $\mathcal{H}^{n}$, because
the order of independence under sublinear expectation space is unchangeable.
But the following lemma will allow us to construct $\mathbb{\hat{E}}$.

\begin{lemma}
\label{le3.3}For each fixed $n\geq1$, let $\mathbb{\hat{F}}_{k}^{n}$, $k\geq
n$, be the distribution of $(X_{\delta_{n}},X_{2\delta_{n}}-X_{\delta_{n}%
},\ldots,X_{2^{n}\delta_{n}}-X_{(2^{n}-1)\delta_{n}})$ under $\mathbb{\hat{E}%
}^{k}$. Then $\{ \mathbb{\hat{F}}_{k}^{n}:k\geq n\}$ is tight.
\end{lemma}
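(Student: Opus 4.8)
The plan is to invoke the tightness criterion recorded in the remark following the definition of tightness: the family $\{\mathbb{\hat{F}}_{k}^{n}:k\geq n\}$ is tight exactly when its upper envelope
\[
\mathbb{\hat{F}}^{n}[\varphi]:=\sup_{k\geq n}\mathbb{\hat{F}}_{k}^{n}[\varphi],\qquad\varphi\in C_{b.Lip}(\mathbb{R}^{2^{n}\times2d}),
\]
is a tight sublinear expectation. That $\mathbb{\hat{F}}^{n}$ is a sublinear expectation is immediate from Lemma \ref{le3.2}(1): a supremum of monotone, sub-additive, positively homogeneous functionals inherits these properties, it preserves constants because $\mathbb{\hat{F}}_{k}^{n}[c]=c$ for every $k$, and it is finite since $|\mathbb{\hat{F}}_{k}^{n}[\varphi]|\leq\|\varphi\|_{\infty}$. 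Thus everything reduces to verifying the tail condition in the definition of tightness, namely: for every $\varepsilon>0$ there exist $N>0$ and $\varphi_{N}\in C_{b.Lip}$ with $I_{\{|x|\geq N\}}\leq\varphi_{N}$ such that $\sup_{k\geq n}\mathbb{\hat{E}}^{k}[\varphi_{N}(\xi^{n})]<\varepsilon$, where I abbreviate $\xi^{n}:=(X_{\delta_{n}},\ldots,X_{2^{n}\delta_{n}}-X_{(2^{n}-1)\delta_{n}})$ and write $\xi_{j}^{n}:=X_{j\delta_{n}}-X_{(j-1)\delta_{n}}$ for its $j$-th block.

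First I would reduce the tail of the whole vector to the tails of the individual blocks. Choose $\varphi_{N}\in C_{b.Lip}$ with $0\leq\varphi_{N}\leq1$, $\varphi_{N}=1$ on $\{|x|\geq N\}$ and $\varphi_{N}=0$ on $\{|x|\leq N/2\}$. Since $|\xi^{n}|\geq N$ forces $|\xi_{j}^{n}|\geq m$ for some $j$, with $m:=N2^{-n/2}$, one has the pointwise bound $\varphi_{N}(\xi^{n})\leq\sum_{j=1}^{2^{n}}\psi_{m}(\xi_{j}^{n})$ for a cut-off $\psi_{m}\in C_{b.Lip}(\mathbb{R}^{2d})$ at radius $m$. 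Splitting the $\tilde{M}$- and $\tilde{N}$-coordinates of each block, and using $|\xi_{j}^{n}|^{2}=|\tilde{M}\text{-part}|^{2}+|\tilde{N}\text{-part}|^{2}$, gives a further bound $\psi_{m}(\xi_{j}^{n})\leq\chi(\tilde{M}_{j\delta_{n}}-\tilde{M}_{(j-1)\delta_{n}})+\chi(\tilde{N}_{j\delta_{n}}-\tilde{N}_{(j-1)\delta_{n}})$ for a one-sided cut-off $\chi\in C_{b.Lip}(\mathbb{R}^{d})$ at radius comparable to $m$. Monotonicity and sub-additivity of $\mathbb{\hat{E}}^{k}$ then yield
\[
\mathbb{\hat{E}}^{k}[\varphi_{N}(\xi^{n})]\leq\sum_{j=1}^{2^{n}}\Big(\mathbb{\hat{E}}^{k}[\chi(\tilde{M}_{j\delta_{n}}-\tilde{M}_{(j-1)\delta_{n}})]+\mathbb{\hat{E}}^{k}[\chi(\tilde{N}_{j\delta_{n}}-\tilde{N}_{(j-1)\delta_{n}})]\Big).
\]

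The crucial point, and the heart of the argument, is that although the joint law of $\xi_{j}^{n}$ under $\mathbb{\hat{E}}^{k}$ genuinely depends on $k$, each summand above involves only one marginal, and these marginals are $k$-free. Indeed, $\tilde{M}_{j\delta_{n}}-\tilde{M}_{(j-1)\delta_{n}}$ is a block sum of the level-$k$ increments of $\tilde{M}$, so applying Lemma \ref{le3.2}(3) at level $k$ to the bounded Lipschitz function ``sum the block, then apply $\chi$'' gives $\mathbb{\hat{E}}^{k}[\chi(\tilde{M}_{j\delta_{n}}-\tilde{M}_{(j-1)\delta_{n}})]=\mathbb{\hat{E}}_{1}[\chi(M_{j\delta_{n}}-M_{(j-1)\delta_{n}})]$, and similarly the $\tilde{N}$-term equals $\mathbb{\hat{E}}_{2}[\chi(N_{j\delta_{n}}-N_{(j-1)\delta_{n}})]$; both are independent of $k$. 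Since $\chi(w)\leq Cm^{-1}|w|$, assumption (A)(1) (which gives $\mathbb{\hat{E}}_{i}\leq\mathbb{\tilde{E}}_{i}$) together with positive homogeneity bounds each term by $Cm^{-1}\mathbb{\tilde{E}}_{1}[|M_{j\delta_{n}}-M_{(j-1)\delta_{n}}|]$ (resp. for $N$), the moments being finite by assumption (A). Summing over $j\leq2^{n}$ gives
\[
\sup_{k\geq n}\mathbb{\hat{E}}^{k}[\varphi_{N}(\xi^{n})]\leq\frac{C2^{n/2}}{N}\sum_{j=1}^{2^{n}}\Big(\mathbb{\tilde{E}}_{1}[|M_{j\delta_{n}}-M_{(j-1)\delta_{n}}|]+\mathbb{\tilde{E}}_{2}[|N_{j\delta_{n}}-N_{(j-1)\delta_{n}}|]\Big),
\]
a constant depending on $n$ but not on $k$, which tends to $0$ as $N\to\infty$; taking $N$ large makes it smaller than $\varepsilon$. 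The main obstacle is precisely this last reduction: one must exploit that only the one-dimensional marginals of the increments are preserved across the mutually inconsistent expectations $\mathbb{\hat{E}}^{k}$, so the block-wise and coordinate-wise splitting is what renders the estimate uniform in $k$; a direct bound on the genuinely $k$-dependent joint law of $\xi^{n}$ would not close. The unboundedness of $|\cdot|$ in the moment bound is handled by the usual truncation.
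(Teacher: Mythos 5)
Your proposal is correct and follows essentially the same route as the paper: bound the tail test function by $O(1/N)$ times the first moments of the individual increments of $\tilde{M}$ and $\tilde{N}$, use Lemma \ref{le3.2}(3) to identify these marginal quantities with the $k$-independent expectations $\mathbb{\hat{E}}_{1}$, $\mathbb{\hat{E}}_{2}$ on the original spaces, and let $N\rightarrow\infty$. The only difference is cosmetic: you keep the test functions bounded (via the cut-offs $\psi_{m}$, $\chi$) before invoking the marginal identity and only then pass to the moment bound, whereas the paper applies the identity directly to $\sum_{i}|x_{i}|$.
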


\begin{proof}
For each given $N>1$ and $\varphi \in C_{b.Lip}(\mathbb{R}^{2^{n}\times2d})$
with $I_{\{x:|x|\geq N\}}\leq \varphi \leq I_{\{x:|x|\geq N-1\}}$, we have%
\begin{align*}
\mathbb{\hat{F}}_{k}^{n}[\varphi] &  =\mathbb{\hat{E}}^{k}[\varphi
(X_{\delta_{n}},X_{2\delta_{n}}-X_{\delta_{n}},\ldots,X_{2^{n}\delta_{n}%
}-X_{(2^{n}-1)\delta_{n}})]\\
&  \leq \frac{1}{N-1}\mathbb{\hat{E}}^{k}[|(X_{\delta_{n}},X_{2\delta_{n}%
}-X_{\delta_{n}},\ldots,X_{2^{n}\delta_{n}}-X_{(2^{n}-1)\delta_{n}})|]\\
&  \leq \frac{1}{N-1}\mathbb{\hat{E}}^{k}[\sum_{i=1}^{2^{n}}(|\tilde
{M}_{i\delta_{n}}-\tilde{M}_{(i-1)\delta_{n}}|+|\tilde{N}_{i\delta_{n}}%
-\tilde{N}_{(i-1)\delta_{n}}|)]\\
&  \leq \frac{1}{N-1}(\mathbb{\hat{E}}^{k}[\sum_{i=1}^{2^{n}}|\tilde
{M}_{i\delta_{n}}-\tilde{M}_{(i-1)\delta_{n}}|]+\mathbb{\hat{E}}^{k}%
[\sum_{i=1}^{2^{n}}|\tilde{N}_{i\delta_{n}}-\tilde{N}_{(i-1)\delta_{n}}|])\\
&  =\frac{1}{N-1}(\mathbb{\hat{E}}_{1}[\sum_{i=1}^{2^{n}}|M_{i\delta_{n}%
}-M_{(i-1)\delta_{n}}|]+\mathbb{\hat{E}}_{2}[\sum_{i=1}^{2^{n}}|N_{i\delta
_{n}}-N_{(i-1)\delta_{n}}|]),
\end{align*}
where the last equality in the above formula is due to (3) in Lemma
\ref{le3.2}. Then for each $\varepsilon>0$, we can take an $N_{0}>1$ and
$\varphi_{0}\in C_{b.Lip}(\mathbb{R}^{2^{n}\times2d})$ with $I_{\{x:|x|\geq
N_{0}\}}\leq \varphi_{0}\leq I_{\{x:|x|\geq N_{0}-1\}}$ such that%
\[
\sup_{k\geq n}\mathbb{\hat{F}}_{k}^{n}[\varphi_{0}]\leq \frac{1}{N_{0}%
-1}(\mathbb{\hat{E}}_{1}[\sum_{i=1}^{2^{n}}|M_{i\delta_{n}}-M_{(i-1)\delta
_{n}}|]+\mathbb{\hat{E}}_{2}[\sum_{i=1}^{2^{n}}|N_{i\delta_{n}}-N_{(i-1)\delta
_{n}}|])<\varepsilon.
\]
Thus $\{ \mathbb{\hat{F}}_{k}^{n}:k\geq n\}$ is tight.
\end{proof}

Now we will use this lemma to construct a sublinear expectation $\mathbb{\hat
{E}}:\mathcal{L}\rightarrow \mathbb{R}$.

\begin{lemma}
\label{le3.4}Set $\mathcal{D}=\{i2^{-n}:n\geq1,0\leq i\leq2^{n}\}$. Then there
exists a sublinear expectation $\mathbb{\hat{E}}:\mathcal{L}\rightarrow
\mathbb{R}$ satisfying the following properties:

\begin{description}
\item[(1)] For each $0\leq t_{1}<\cdots<t_{n}$ with $t_{i}\in \mathcal{D}$,
$i\leq n$, $X_{t_{n}}-X_{t_{n-1}}$ is independent from $(X_{t_{1}}%
,\ldots,X_{t_{n-1}})$;

\item[(2)] For each $0\leq t_{1}<\cdots<t_{n}$ with $t_{i}\in \mathcal{D}$,
$i\leq n$, $(\tilde{M}_{t_{1}},\ldots,\tilde{M}_{t_{n}})\overset{d}%
{=}(M_{t_{1}},\ldots,M_{t_{n}})$ and $(\tilde{N}_{t_{1}},\ldots,\tilde
{N}_{t_{n}})\overset{d}{=}(N_{t_{1}},\ldots,N_{t_{n}})$.
\end{description}
\end{lemma}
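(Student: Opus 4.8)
The plan is to obtain $\mathbb{\hat{E}}$ as the pointwise limit of a well-chosen subsequence of $(\mathbb{\hat{E}}^{k})_{k\geq1}$, and then to check that the independence and the marginal laws survive this limit. First I would fix, for each $n\geq1$, the tight family $\{\mathbb{\hat{F}}_{k}^{n}:k\geq n\}$ supplied by Lemma \ref{le3.3}. Combining Theorem \ref{th2.1} with a diagonal argument, I would extract a single subsequence $\{k_{j}:j\geq1\}$ such that for \emph{every} fixed $n$ the distributions $\mathbb{\hat{F}}_{k_{j}}^{n}$ converge in distribution as $j\rightarrow\infty$: choose a subsequence along which $\mathbb{\hat{F}}_{k}^{1}$ converges, then a further one for $n=2$, and so on, and take the diagonal. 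Each $Y\in\mathcal{L}$ lies in some $\mathcal{H}^{n}$, say $Y=\varphi(X_{\delta_{n}},\ldots,X_{2^{n}\delta_{n}}-X_{(2^{n}-1)\delta_{n}})$, and for $k_{j}\geq n$ we have $\mathbb{\hat{E}}^{k_{j}}[Y]=\mathbb{\hat{F}}_{k_{j}}^{n}[\varphi]$, so I may set $\mathbb{\hat{E}}[Y]:=\lim_{j\rightarrow\infty}\mathbb{\hat{E}}^{k_{j}}[Y]$. Since the number $\mathbb{\hat{E}}^{k_{j}}[Y]$ does not depend on which $\mathcal{H}^{m}$ ($m\geq n$) one regards $Y$ as living in, this limit is unambiguous.

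Next I would verify that $\mathbb{\hat{E}}:\mathcal{L}\rightarrow\mathbb{R}$ is a sublinear expectation. Monotonicity, constant preservation, sub-additivity and positive homogeneity are each an (in)equality among finitely many values of $\mathbb{\hat{E}}^{k_{j}}$, all valid for large $k_{j}$ by Lemma \ref{le3.2}(1), and each survives the pointwise limit. For property (2) I would note that when a test function involves only the $\tilde{M}$-coordinates, Step 1 collapses: the inner $\mathbb{\hat{E}}_{2}$ is applied to a function not involving $N$ and returns it unchanged, so the backward iteration of Step 2 reduces to the iterated $\mathbb{\hat{E}}_{1}$-expectation of the corresponding $M$-increments. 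Because $(M_{t})$ has independent increments under $\mathbb{\hat{E}}_{1}$, this iterated expectation is insensitive to refining the dyadic mesh; hence $\mathbb{\hat{E}}^{k}[\varphi(\tilde{M}_{t_{1}},\ldots,\tilde{M}_{t_{m}})]=\mathbb{\hat{E}}_{1}[\varphi(M_{t_{1}},\ldots,M_{t_{m}})]$ for all large $k$, and letting $k=k_{j}\rightarrow\infty$ gives the stated identity in distribution, symmetrically for $\tilde{N}$.

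The heart of the matter is property (1). I would first show that each $\mathbb{\hat{E}}^{k}$ already enjoys the required independence at dyadic times: fix $t_{1}<\cdots<t_{m}$ in $\mathcal{D}$ and choose $n$ so large that all $t_{i}$ are multiples of $\delta_{n}$. For $k\geq n$ the coarse increment $X_{t_{m}}-X_{t_{m-1}}$ is a sum of consecutive $\delta_{k}$-increments, and the backward definition of $\mathbb{\hat{E}}^{k}$ integrates out exactly these highest-index increments first. The resulting partial value is precisely $\mathbb{\hat{E}}^{k}[\varphi(x,X_{t_{m}}-X_{t_{m-1}})]_{x=(X_{t_{1}},\ldots,X_{t_{m-1}})}$, and integrating the remaining increments then yields the independence of $X_{t_{m}}-X_{t_{m-1}}$ from $(X_{t_{1}},\ldots,X_{t_{m-1}})$ under $\mathbb{\hat{E}}^{k}$.

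The main obstacle is to pass this independence to the limit, since the inner function $\psi_{k}(x):=\mathbb{\hat{E}}^{k}[\varphi(x,X_{t_{m}}-X_{t_{m-1}})]$ depends on $k$, so one cannot simply take limits on both sides of the independence identity. I would resolve this by observing that $\psi_{k}(x)\rightarrow\psi(x):=\mathbb{\hat{E}}[\varphi(x,X_{t_{m}}-X_{t_{m-1}})]$ pointwise, that all $\psi_{k}$ and $\psi$ are bounded by $\Vert\varphi\Vert_{\infty}$ and share the Lipschitz constant of $\varphi$ (by sub-additivity of $\mathbb{\hat{E}}^{k}$), and that $\psi\in C_{b.Lip}$ so that $\psi(X_{t_{1}},\ldots,X_{t_{m-1}})\in\mathcal{L}$. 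Equi-Lipschitz continuity and pointwise convergence give uniform convergence of $\psi_{k_{j}}$ to $\psi$ on each ball, while the uniform tightness of $\{\mathbb{\hat{F}}_{k}^{n}\}$ from Lemma \ref{le3.3} controls the set $\{|(X_{t_{1}},\ldots,X_{t_{m-1}})|\geq N\}$ uniformly in $j$. Writing $Z=(X_{t_{1}},\ldots,X_{t_{m-1}})$ and combining the estimate
\[
|\mathbb{\hat{E}}^{k_{j}}[\psi_{k_{j}}(Z)]-\mathbb{\hat{E}}^{k_{j}}[\psi(Z)]|\leq\mathbb{\hat{E}}^{k_{j}}[|\psi_{k_{j}}(Z)-\psi(Z)|]
\]
with the convergence $\mathbb{\hat{E}}^{k_{j}}[\psi(Z)]\rightarrow\mathbb{\hat{E}}[\psi(Z)]$ lets me take $j\rightarrow\infty$ in $\mathbb{\hat{E}}^{k_{j}}[\varphi(Z,X_{t_{m}}-X_{t_{m-1}})]=\mathbb{\hat{E}}^{k_{j}}[\psi_{k_{j}}(Z)]$ to conclude $\mathbb{\hat{E}}[\varphi(Z,X_{t_{m}}-X_{t_{m-1}})]=\mathbb{\hat{E}}[\psi(Z)]$, which is exactly property (1).
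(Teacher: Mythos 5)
Your proposal is correct and follows essentially the same route as the paper: the same diagonal extraction via Lemma \ref{le3.3} and Theorem \ref{th2.1}, the same definition of $\mathbb{\hat{E}}$ as a pointwise limit, and the same equi-Lipschitz/pointwise-convergence argument with a uniform tail bound to pass the inner conditional expectation $\psi_{k_j}\to\psi$ through the limit. The only cosmetic difference is that you control the tail term via the tightness statement itself, whereas the paper uses the underlying first-moment bound $\mathbb{\hat{E}}^{k}[|(X_{t_{1}},\ldots,X_{t_{n-1}})|]\leq\mathbb{\hat{E}}_{1}[|(M_{t_{1}},\ldots,M_{t_{n-1}})|]+\mathbb{\hat{E}}_{2}[|(N_{t_{1}},\ldots,N_{t_{n-1}})|]$ directly; these amount to the same estimate.
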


\begin{proof}
For $n=1$, by Lemma \ref{le3.3}, we know $\{ \mathbb{\hat{F}}_{k}^{1}:k\geq1\}$
is tight. Then, by Theorem \ref{th2.1}, there exists a subsequence
$\{ \mathbb{\hat{F}}_{k_{j}^{1}}^{1}:j\geq1\}$ which converges in distribution,
i.e., for each $\varphi \in C_{b.Lip}(\mathbb{R}^{2\times2d})$, $\{ \mathbb{\hat
{F}}_{k_{j}^{1}}^{1}[\varphi]:j\geq1\}$ is a Cauchy sequence. Note that
$\mathbb{\hat{F}}_{k_{j}^{1}}^{1}[\varphi]=\mathbb{\hat{E}}^{k_{j}^{1}%
}[\varphi(X_{2^{-1}},X_{1}-X_{2^{-1}})]$, then for each $Y\in \mathcal{H}^{1}$,
$\{ \mathbb{\hat{E}}^{k_{j}^{1}}[Y]:j\geq1\}$ is a Cauchy sequence.

For $n=2$, by Lemma \ref{le3.3} and Theorem \ref{th2.1}, we can find a
subsequence $\{k_{j}^{2}:j\geq1\} \subset \{k_{j}^{1}:j\geq1\}$ such that for
each $Y\in \mathcal{H}^{2}$, $\{ \mathbb{\hat{E}}^{k_{j}^{2}}[Y]:j\geq1\}$ is a
Cauchy sequence.

Repeat this process, for each $n\geq2$, we can find a subsequence $\{k_{j}%
^{n}:j\geq1\} \subset \{k_{j}^{n-1}:j\geq1\}$ such that for each $Y\in
\mathcal{H}^{n}$, $\{ \mathbb{\hat{E}}^{k_{j}^{n}}[Y]:j\geq1\}$ is a Cauchy
sequence. Taking the diagonal sequence $\{k_{j}^{j}:j\geq1\}$, then for each
$Y\in \mathcal{L}$, $\{ \mathbb{\hat{E}}^{k_{j}^{j}}[Y]:j\geq1\}$ is a Cauchy
sequence, where $\mathbb{\hat{E}}^{k_{j}^{j}}[Y]=\infty$ if $Y\not \in
\mathcal{H}^{k_{j}^{j}}$. Define%
\[
\mathbb{\hat{E}}[Y]=\lim_{j\rightarrow \infty}\mathbb{\hat{E}}^{k_{j}^{j}%
}[Y]\text{ for each }Y\in \mathcal{L}.
\]
For each $Y$, $Z\in \mathcal{L}$, there exists a $j_{0}$ such that $Y$,
$Z\in \mathcal{H}^{k_{j}^{j}}$ for $j\geq j_{0}$. From this we can easily
deduce that $\mathbb{\hat{E}}$ is a sublinear expectation.

Now we prove that this $\mathbb{\hat{E}}$ satisfies (1) and (2). For each
$0\leq t_{1}<\cdots<t_{n}$ with $t_{i}\in \mathcal{D}$, $i\leq n$, there exists
a $j_{0}$ such that $\varphi(X_{t_{1}},\ldots,X_{t_{n-1}},X_{t_{n}}%
-X_{t_{n-1}})\in \mathcal{H}^{k_{j}^{j}}$ for each $j\geq j_{0}$ and
$\varphi \in C_{b.Lip}(\mathbb{R}^{n\times2d})$. Thus, from (2) in Lemma
\ref{le3.2}, we can get%
\begin{align*}
\mathbb{\hat{E}}[\varphi(X_{t_{1}},\ldots,X_{t_{n-1}},X_{t_{n}}-X_{t_{n-1}})]
&  =\lim_{j\rightarrow \infty}\mathbb{\hat{E}}^{k_{j}^{j}}[\varphi(X_{t_{1}%
},\ldots,X_{t_{n-1}},X_{t_{n}}-X_{t_{n-1}})]\\
&  =\lim_{j\rightarrow \infty}\mathbb{\hat{E}}^{k_{j}^{j}}[\psi_{j}(X_{t_{1}%
},\ldots,X_{t_{n-1}})],
\end{align*}
where $\psi_{j}(x_{1},\ldots,x_{n-1})=\mathbb{\hat{E}}^{k_{j}^{j}}%
[\varphi(x_{1},\ldots,x_{n-1},X_{t_{n}}-X_{t_{n-1}})]$ for each $x_{i}%
\in \mathbb{R}^{2d}$, $i\leq n-1$. Define $\psi(x_{1},\ldots,x_{n-1}%
)=\mathbb{\hat{E}}[\varphi(x_{1},\ldots,x_{n-1},X_{t_{n}}-X_{t_{n-1}})]$ for
each $x_{i}\in \mathbb{R}^{2d}$, $i\leq n-1$. In order to prove (1), we only
need to show that%
\begin{equation}
\lim_{j\rightarrow \infty}\mathbb{\hat{E}}^{k_{j}^{j}}[\psi_{j}(X_{t_{1}%
},\ldots,X_{t_{n-1}})]=\mathbb{\hat{E}}[\psi(X_{t_{1}},\ldots,X_{t_{n-1}%
})].\label{eq31}%
\end{equation}
It is clear that $\psi_{j}$, $j\geq j_{0}$, and $\psi$ are bounded Lipschitz
functions with the common bound $K_{\varphi}$ and the common Lipschitz
constant $L_{\varphi}$, where $K_{\varphi}$ and $L_{\varphi}$ are respective
the bound and Lipschitz constant of $\varphi$. On the other hand, for each
$x=(x_{1},\ldots,x_{n-1})\in \mathbb{R}^{(n-1)\times2d}$, by the definition of
$\mathbb{\hat{E}}$, we can get $\psi_{j}(x)\rightarrow \psi(x)$. Thus, from the
common Lipschitz constant $L_{\varphi}$ and pointwise convergence, we can
easily obtain that $\{ \psi_{j}:j\geq j_{0}\}$ converges uniformly to $\psi$ on
any compact set in $\mathbb{R}^{(n-1)\times2d}$. For each given $N>0$, we have%
\[
|\psi_{j}(x)-\psi(x)|\leq a_{j}+2K_{\varphi}I_{\{x:|x|>N\}}\leq a_{j}%
+\frac{2K_{\varphi}}{N}|x|,
\]
where $a_{j}=\sup_{|x|\leq N}|\psi_{j}(x)-\psi(x)|$. From the uniform
convergence on any compact set, we know $a_{j}\rightarrow0$ as $j\rightarrow
\infty$. Thus%
\begin{align*}
&  |\mathbb{\hat{E}}^{k_{j}^{j}}[\psi_{j}(X_{t_{1}},\ldots,X_{t_{n-1}%
})]-\mathbb{\hat{E}}^{k_{j}^{j}}[\psi(X_{t_{1}},\ldots,X_{t_{n-1}})]|\\
&  \leq \mathbb{\hat{E}}^{k_{j}^{j}}[|\psi_{j}(X_{t_{1}},\ldots,X_{t_{n-1}%
})-\psi(X_{t_{1}},\ldots,X_{t_{n-1}})|]\\
&  \leq \mathbb{\hat{E}}^{k_{j}^{j}}[a_{j}+\frac{2K_{\varphi}}{N}|(X_{t_{1}%
},\ldots,X_{t_{n-1}})|]\\
&  \leq a_{j}+\frac{2K_{\varphi}}{N}(\mathbb{\hat{E}}^{k_{j}^{j}}[|(\tilde
{M}_{t_{1}},\ldots,\tilde{M}_{t_{n-1}})|]+\mathbb{\hat{E}}^{k_{j}^{j}%
}[|(\tilde{N}_{t_{1}},\ldots \tilde{N}_{t_{n-1}})|])\\
&  =a_{j}+\frac{2K_{\varphi}}{N}(\mathbb{\hat{E}}_{1}[|(M_{t_{1}}%
,\ldots,M_{t_{n-1}})|]+\mathbb{\hat{E}}_{2}[|(N_{t_{1}},\ldots N_{t_{n-1}%
})|]),
\end{align*}
where the last equality in the above formula is due to (3) in Lemma
\ref{le3.2}. Note that $\mathbb{\hat{E}}^{k_{j}^{j}}[\psi(X_{t_{1}}%
,\ldots,X_{t_{n-1}})]\rightarrow \mathbb{\hat{E}}[\psi(X_{t_{1}},\ldots
,X_{t_{n-1}})]$, then%
\begin{align*}
&  \limsup_{j\rightarrow \infty}|\mathbb{\hat{E}}^{k_{j}^{j}}[\psi_{j}%
(X_{t_{1}},\ldots,X_{t_{n-1}})]-\mathbb{\hat{E}}[\psi(X_{t_{1}},\ldots
,X_{t_{n-1}})]|\\
&  \leq \frac{2K_{\varphi}}{N}(\mathbb{\hat{E}}_{1}[|(M_{t_{1}},\ldots
,M_{t_{n-1}})|]+\mathbb{\hat{E}}_{2}[|(N_{t_{1}},\ldots N_{t_{n-1}})|]).
\end{align*}
Since $N$ can be arbitrarily large, we get relation (\ref{eq31}). Thus (1) is
obtained. (2) is obvious by the definition of $\mathbb{\hat{E}}$ and (3) in
Lemma \ref{le3.2}.
\end{proof}

\textbf{Proof of Theorem \ref{th3.1}.} We first extend the sublinear
expectation $\mathbb{\hat{E}}:\mathcal{L}\rightarrow \mathbb{R}$ to
$\mathbb{\hat{E}}:\mathcal{H}\rightarrow \mathbb{R}$. Here we still use
$\mathbb{\hat{E}}$ for notation simplicity. For each $\varphi(X_{t_{1}%
},X_{t_{2}}-X_{t_{1}},\ldots,X_{t_{n}}-X_{t_{n-1}})\in \mathcal{H}$ with
$\varphi \in C_{b.Lip}(\mathbb{R}^{n\times2d})$, we can choose $t_{k}^{i}%
\in \mathcal{D}$, $k\leq n$, $i\geq1$, such that $t_{k}^{i}<t_{k+1}^{i}$ and
$t_{k}^{i}\downarrow t_{k}$ as $i\rightarrow \infty$. By assumption (A), we
have%
\begin{align*}
&  |\mathbb{\hat{E}}[\varphi(X_{t_{1}^{i}},X_{t_{2}^{i}}-X_{t_{1}^{i}}%
,\ldots,X_{t_{n}^{i}}-X_{t_{n-1}^{i}})]-\mathbb{\hat{E}}[\varphi(X_{t_{1}^{j}%
},X_{t_{2}^{j}}-X_{t_{1}^{j}},\ldots,X_{t_{n}^{j}}-X_{t_{n-1}^{j}})]|\\
&  \leq L_{\varphi}\mathbb{\hat{E}}[\sum_{k=1}^{n}|X_{t_{k}^{i}}-X_{t_{k}^{j}%
}-X_{t_{k-1}^{i}}+X_{t_{k-1}^{j}}|]\\
&  \leq L_{\varphi}\mathbb{\hat{E}}[\sum_{k=1}^{n}|(X_{t_{k}^{i}}-X_{t_{k}%
})-(X_{t_{k}^{j}}-X_{t_{k}})-(X_{t_{k-1}^{i}}-X_{t_{k-1}})+(X_{t_{k-1}^{j}%
}-X_{t_{k-1}})|]\\
&  \leq2L_{\varphi}\{ \mathbb{\hat{E}}[\sum_{k=1}^{n}(|\tilde{M}_{t_{k}^{i}%
}-\tilde{M}_{t_{k}}|+|\tilde{M}_{t_{k}^{j}}-\tilde{M}_{t_{k}}|)]+\mathbb{\hat
{E}}[\sum_{k=1}^{n}(|\tilde{N}_{t_{k}^{i}}-\tilde{N}_{t_{k}}|+|\tilde
{N}_{t_{k}^{j}}-\tilde{N}_{t_{k}}|)]\} \\
&  =2L_{\varphi}\{ \mathbb{\hat{E}}_{1}[\sum_{k=1}^{n}(|M_{t_{k}^{i}}-M_{t_{k}%
}|+|M_{t_{k}^{j}}-M_{t_{k}}|)]+\mathbb{\hat{E}}_{2}[\sum_{k=1}^{n}%
(|N_{t_{k}^{i}}-N_{t_{k}}|+|N_{t_{k}^{j}}-N_{t_{k}}|)]\} \\
&  \rightarrow0\text{ as }i,j\rightarrow \infty,
\end{align*}
where $L_{\varphi}$ is the Lipschitz constant of $\varphi$ and $t_{0}^{i}=0$
for $i\geq1$. So we can define%
\[
\mathbb{\hat{E}}[\varphi(X_{t_{1}},X_{t_{2}}-X_{t_{1}},\ldots,X_{t_{n}%
}-X_{t_{n-1}})]=\lim_{i\rightarrow \infty}\mathbb{\hat{E}}[\varphi(X_{t_{1}%
^{i}},X_{t_{2}^{i}}-X_{t_{1}^{i}},\ldots,X_{t_{n}^{i}}-X_{t_{n-1}^{i}})].
\]
It is easy to check that the limit does not depend on the choice of $t_{k}%
^{i}$ by using the same estimate as above. On the other hand, if
$\varphi(X_{t_{1}},X_{t_{2}}-X_{t_{1}},\ldots,X_{t_{n}}-X_{t_{n-1}}%
)=\tilde{\varphi}(X_{t_{1}},X_{t_{2}}-X_{t_{1}},\ldots,X_{t_{n}}-X_{t_{n-1}})$
for $\varphi$, $\tilde{\varphi}\in C_{b.Lip}(\mathbb{R}^{n\times2d})$, then%
\begin{align*}
&  |\varphi(X_{t_{1}^{i}},X_{t_{2}^{i}}-X_{t_{1}^{i}},\ldots,X_{t_{n}^{i}%
}-X_{t_{n-1}^{i}})-\tilde{\varphi}(X_{t_{1}^{i}},X_{t_{2}^{i}}-X_{t_{1}^{i}%
},\ldots,X_{t_{n}^{i}}-X_{t_{n-1}^{i}})|\\
&  \leq|\varphi(X_{t_{1}^{i}},X_{t_{2}^{i}}-X_{t_{1}^{i}},\ldots,X_{t_{n}^{i}%
}-X_{t_{n-1}^{i}})-\varphi(X_{t_{1}},X_{t_{2}}-X_{t_{1}},\ldots,X_{t_{n}%
}-X_{t_{n-1}})|\\
&  \  \ +|\tilde{\varphi}(X_{t_{1}},X_{t_{2}}-X_{t_{1}},\ldots,X_{t_{n}%
}-X_{t_{n-1}})-\tilde{\varphi}(X_{t_{1}^{i}},X_{t_{2}^{i}}-X_{t_{1}^{i}%
},\ldots,X_{t_{n}^{i}}-X_{t_{n-1}^{i}})|\\
&  \leq2(L_{\varphi}+L_{\tilde{\varphi}})\sum_{k=1}^{n}(|M_{t_{k}^{i}%
}-M_{t_{k}}|+|N_{t_{k}^{i}}-N_{t_{k}}|).
\end{align*}
Thus, by assumption (A), $\mathbb{\hat{E}}:\mathcal{H}\rightarrow \mathbb{R}$
is well-defined. It is easy to verify that $\mathbb{\hat{E}}$ is a sublinear
expectation. For each $0\leq t_{1}<\cdots<t_{n}\leq1$, we can choose
$t_{k}^{i}\in \mathcal{D}$ as above. By the definition of $\mathbb{\hat{E}}$
and (1) in Lemma \ref{le3.4}, we can get that for each $\varphi \in
C_{b.Lip}(\mathbb{R}^{n\times2d})$,
\begin{align*}
\mathbb{\hat{E}}[\varphi(X_{t_{1}},\ldots,X_{t_{n-1}},X_{t_{n}}-X_{t_{n-1}})]
&  =\lim_{i\rightarrow \infty}\mathbb{\hat{E}}[\varphi(X_{t_{1}^{i}}%
,\ldots,X_{t_{n-1}^{i}},X_{t_{n}^{i}}-X_{t_{n-1}^{i}})]\\
&  =\lim_{i\rightarrow \infty}\mathbb{\hat{E}}[\psi_{i}(X_{t_{1}^{i}}%
,\ldots,X_{t_{n-1}^{i}})],
\end{align*}
where $\psi_{i}(x_{1},\ldots,x_{n-1})=\mathbb{\hat{E}}[\varphi(x_{1}%
,\ldots,x_{n-1},X_{t_{n}^{i}}-X_{t_{n-1}^{i}})]$. Define%
\[
\psi(x_{1},\ldots,x_{n-1})=\mathbb{\hat{E}}[\varphi(x_{1},\ldots
,x_{n-1},X_{t_{n}}-X_{t_{n-1}})].
\]
Then%
\begin{align*}
&  |\psi_{i}(x_{1},\ldots,x_{n-1})-\psi(x_{1},\ldots,x_{n-1})|\\
&  \leq L_{\varphi}\mathbb{\hat{E}}[|X_{t_{n}^{i}}-X_{t_{n-1}^{i}}-X_{t_{n}%
}+X_{t_{n-1}}|]\\
&  \leq L_{\varphi}\{ \mathbb{\hat{E}}_{1}[|M_{t_{n}^{i}}-M_{t_{n}%
}|+|M_{t_{n-1}^{i}}-M_{t_{n-1}}|]+\mathbb{\hat{E}}_{2}[|N_{t_{n}^{i}}%
-N_{t_{n}}|+|N_{t_{n-1}^{i}}-N_{t_{n-1}}|]\},
\end{align*}
which implies%
\begin{align*}
&  |\mathbb{\hat{E}}[\psi_{i}(X_{t_{1}^{i}},\ldots,X_{t_{n-1}^{i}%
})]-\mathbb{\hat{E}}[\psi(X_{t_{1}^{i}},\ldots,X_{t_{n-1}^{i}})]|\\
&  \leq L_{\varphi}\{ \mathbb{\hat{E}}_{1}[|M_{t_{n}^{i}}-M_{t_{n}%
}|+|M_{t_{n-1}^{i}}-M_{t_{n-1}}|]+\mathbb{\hat{E}}_{2}[|N_{t_{n}^{i}}%
-N_{t_{n}}|+|N_{t_{n-1}^{i}}-N_{t_{n-1}}|]\}.
\end{align*}
From this we deduce%
\begin{align*}
\mathbb{\hat{E}}[\varphi(X_{t_{1}},\ldots,X_{t_{n-1}},X_{t_{n}}-X_{t_{n-1}})]
&  =\lim_{i\rightarrow \infty}\mathbb{\hat{E}}[\psi(X_{t_{1}^{i}}%
,\ldots,X_{t_{n-1}^{i}})]\\
&  =\mathbb{\hat{E}}[\psi(X_{t_{1}},\ldots,X_{t_{n-1}})].
\end{align*}
Thus $(X_{t})_{t\in \lbrack0,1]}$ has independent increments. It follows from
(2) in Lemma \ref{le3.4} and assumption (A) that $(\tilde{M}_{t})_{t\in
\lbrack0,1]}\overset{d}{=}(M_{t})_{t\in \lbrack0,1]}$ and $(\tilde{N}%
_{t})_{t\in \lbrack0,1]}\overset{d}{=}(N_{t})_{t\in \lbrack0,1]}$. If
$(M_{t})_{t\in \lbrack0,1]}$ and $(N_{t})_{t\in \lbrack0,1]}$ are two processes
with stationary and independent increments, then from the construction of
$\mathbb{\hat{E}}$, $(X_{t})_{t\in \lbrack0,1]}$ has stationary increments. The
proof is complete. \  $\Box$

In the following, we give an example to calculate $\mathbb{\hat{E}}$.

\begin{example}
Let $\Gamma_{i}$, $i=1,2$, be two given bounded subset in $\mathbb{R}^{d\times
d}$. Define $G_{i}:\mathbb{S}(d)\rightarrow \mathbb{R}$ by%
\[
G_{i}(A)=\frac{1}{2}\sup_{Q\in \Gamma_{i}}\mathrm{tr}[AQQ^{T}]\text{ for each
}A\in \mathbb{S}(d).
\]
Let $(B_{t}^{i})_{t\geq0}$ be two $d$-dimensional $G_{i}$-Brownian motion
defined on sublinear expectation space $(\Omega_{i},\mathcal{H}_{i}%
,\mathbb{\hat{E}}_{i})$, $i=1$, $2$. In the above, we construct a sublinear
expectation space $(\Omega,\mathcal{H},\mathbb{\hat{E}})$ and a $2d$%
-dimensional process $(\tilde{B}_{t})_{t\geq0}=(\tilde{B}_{t}^{1},\tilde
{B}_{t}^{2})_{t\geq0}$ with stationary and independent increments satisfying
$(\tilde{B}_{t}^{1})_{t\geq0}\overset{d}{=}(B_{t}^{1})_{t\geq0}$ and
$(\tilde{B}_{t}^{2})_{t\geq0}\overset{d}{=}(B_{t}^{2})_{t\geq0}$. Since%
\[
\mathbb{\hat{E}}[|\tilde{B}_{t}|^{3}]\leq4(\mathbb{\hat{E}}[|\tilde{B}_{t}%
^{1}|^{3}]+\mathbb{\hat{E}}[|\tilde{B}_{t}^{2}|^{3}])=4(\mathbb{\hat{E}}%
_{1}[|B_{t}^{1}|^{3}]+\mathbb{\hat{E}}_{2}[|B_{t}^{2}|^{3}])=4Ct^{\frac{3}{2}%
},
\]
where $C=\mathbb{\hat{E}}_{1}[|B_{1}^{1}|^{3}]+\mathbb{\hat{E}}_{2}[|B_{1}%
^{2}|^{3}]$, by Theorem 1.6 in Chapter III in \cite{P7}, we can obtain that
$(\tilde{B}_{t})_{t\geq0}$ is a $G$-Brownian motion with%
\[
G(A)=\frac{1}{2}\mathbb{\hat{E}}[\langle A\tilde{B}_{1},\tilde{B}_{1}%
\rangle]\text{ for each }A=\left[
\begin{array}
[c]{cc}%
A_{1} & D\\
D^{T} & A_{2}%
\end{array}
\right]  \in \mathbb{S}(2d).
\]
By our construction, it is easy to check that $\mathbb{\hat{E}}^{n}[\langle
D\tilde{B}_{1}^{2},\tilde{B}_{1}^{1}\rangle]=\mathbb{\hat{E}}^{n}[-\langle
D\tilde{B}_{1}^{2},\tilde{B}_{1}^{1}\rangle]=0$ for each $n\geq1$. Thus
$\mathbb{\hat{E}}[\langle D\tilde{B}_{1}^{2},\tilde{B}_{1}^{1}\rangle
]=\mathbb{\hat{E}}[-\langle D\tilde{B}_{1}^{2},\tilde{B}_{1}^{1}\rangle]=0$.
By subadditivity, we can get $\mathbb{\hat{E}}[\langle A\tilde{B}_{1}%
,\tilde{B}_{1}\rangle]=\mathbb{\hat{E}}[\langle A_{1}\tilde{B}_{1}^{1}%
,\tilde{B}_{1}^{1}\rangle+\langle A_{2}\tilde{B}_{1}^{2},\tilde{B}_{1}%
^{2}\rangle]$. Furthermore,
\[
\mathbb{\hat{E}}^{n}[\langle A_{1}\tilde{B}_{1}^{1},\tilde{B}_{1}^{1}%
\rangle+\langle A_{2}\tilde{B}_{1}^{2},\tilde{B}_{1}^{2}\rangle]=\mathbb{\hat
{E}}_{1}[\langle A_{1}B_{1}^{1},B_{1}^{1}\rangle]+\mathbb{\hat{E}}_{2}[\langle
A_{2}B_{1}^{2},B_{1}^{2}\rangle]
\]
for each $n\geq1$ by our construction. Thus $G(A)=G_{1}(A_{1})+G_{2}(A_{2})$.
\end{example}


\begin{thebibliography}{99}                                                                                               %


\bibitem {DHP}Denis, L., Hu, M., Peng S., Function spaces and capacity related
to a sublinear expectation: application to $G$-Brownian motion pathes,
Potential Anal. 34: 139-161, 2011.

\bibitem {G1}Gao, F., Pathwise properties and homomorphic flows for stochastic
differential equations driven by $G$-Brownian motion, Stochastic Processes and
their Applications 119: 3356-3382, 2009.

\bibitem {G2}Gao, F., Jiang H., Large deviations for stochastic differential
equations driven by G-Brownian motion, Stochastic Processes and their
Applications 120(11): 2212-2240, 2010.

\bibitem {H1}Hu M, Li X., Independence under the G-expectation framework,
Journal of Theoretical Probability 27(3): 1011-1020, 2014.

\bibitem {HP}Hu, M., Peng, S., On representation theorem of G-expectations and
paths of $G$-Brownian motion, Acta Math. Appl. Sin. Engl. Ser. 25,(3):
539-546, 2009.

\bibitem {P1}Peng, S., Filtration Consistent Nonliear Expectations and
Evaluations of Contingent Claims, Acta Mathematicae Applicatae Sinica, English
Series 20(2): 1-24, 2004.

\bibitem {P2}Peng, S., Nonlinear expectations and nonlinear Markov chains,
Chin. Ann. Math. 26B(2): 159-184, 2005.

\bibitem {P3}Peng, S., $G$-expectation, $G$-Brownian Motion and Related
Stochastic Calculus of It\^o type, Stochastic analysis and applications,
541-567, Abel Symp., 2, Springer, Berlin, 2007.

\bibitem {P4}Peng, S., Multi-Dimensional $G$-Brownian Motion and Related
Stochastic Calculus under $G$-Expectation, Stochastic Processes and their
Applications 118(12): 2223-2253, 2008.

\bibitem {P5}Peng, S., A New Central Limit Theorem under Sublinear
Expectations, arXiv:0803.2656v1, 2008.

\bibitem {P6}Peng, S., Survey on normal distributions, central limit theorem,
Brownian motion and the related stochastic calculus under sublinear
expectations, Science in China Series A: Mathematics 52(7): 1391-1411, 2009.

\bibitem {P7}Peng, S., Nonlinear Expectations and Stochastic Calculus under
Uncertainty, arXiv:1002.4546v1, 2010.

\bibitem {P8}Peng, S., Tightness, weak compactness of nonlinear expectations
and application to CLT, arXiv:1006.2541v1, 2010.

\bibitem {P9}Peng, S., Backward Stochastic Differential Equation, Nonlinear
Expectation and Their Applications, in Proceedings of the International
Congress of Mathematicians Hyderabad, India, 2010.

\bibitem {S1}Song, Y., Some properties on G-evaluation and its applications to
G-martingale decomposition, Science China Mathematics 54(2): 287-300, 2011.

\bibitem {S2}Song, Y., Properties of hitting times for G-martingales and their
applications, Stochastic Processes and their Applications 121(8): 1770-1784, 2011.

\bibitem {S3}Song, Y., Uniqueness of the representation for $G $-martingales
with finite variation, Electron. J. Probab. 17(24): 1-15, 2012.

\bibitem {S4}Song, Y., Characterizations of processes with stationary and
independent increments under $G $-expectation, Annales de l'Institut Henri
Poincar\'{e} 49(1): 252-269, 2013.

\bibitem {Y1}Yan, L., Zhang Q., Gao B., Hilbert transform of G-Brownian
local time, Stochastics and Dynamics 14(4) : 1450006 (26 pages), 2014.

\bibitem {Y2}Yan L., Sun X., Gao B., Integration with respect to the
G-Brownian local time, Journal of Mathematical Analysis and Applications,
424(2): 835-860, 2015.
\end{thebibliography}
\end{document}